\theoremstyle{plain}
\newtheorem{theo}{Theorem}
\newtheorem{prop}[theo]{Proposition}
\newtheorem{lem}[theo]{Lemma}
\newtheorem{cor}[theo]{Corollary}
\theoremstyle{definition}
\newtheorem{defi}{Definition}
\theoremstyle{remark}
\newtheorem{Rk}{Remark}
\DeclareMathOperator{\dens}{dens}
\DeclareMathOperator{\re}{Re}
\newcommand{\diff}{\mathop{}\!\mathrm{d}}
\begin{document}
	
	\title[Weak inclusiveness of the distribution of primes in residue classes]{Limiting properties of the distribution\\ of primes in an arbitrarily large number of residue~classes}
\author{Lucile Devin}
\email{devin@crm.umontreal.ca}
\address{Centre de recherches mathématiques,
	Université de Montréal,
	Pavillon André-Aisenstadt,
	2920 Chemin de la tour,
	Montréal, Québec, H3T 1J4, Canada}

\keywords{Chebyshev's bias, almost-periodic functions, regularity of measures}
\subjclass[2010]{11K70, 28C15, 11M26}
\date\today

\begin{abstract}
	We generalize current known distribution results on Shanks--Rényi prime number races to the case where arbitrarily many residue classes are involved. 
	Our method handles both the classical case that goes back to Chebyshev and function field analogues developed in the recent years.
	More precisely, let~$\pi(x;q,a)$ be the number of primes up to~$x$ that are congruent to~$a$ modulo~$q$. For a fixed integer~$q$ and distinct invertible congruence classes~$a_0,a_1,\ldots,a_D$, 	assuming the generalized Riemann Hypothesis and a weak version of the linear independence hypothesis, we show that the set of real~$x$ for which the inequalities~$\pi(x;q,a_0)>\pi(x;q,a_1)> \ldots >\pi(x;q,a_D)$ are simultaneously satisfied admits a logarithmic density.
\end{abstract}

	\maketitle
	
	\section{Prime number races}
	
	In \cite{ChebLetter}, Chebyshev detected a bias in the distribution of prime numbers in arithmetic progressions. 
	His observation, under the name of prime number races or Chebyshev's bias, has been studied and generalized extensively these last 25 years, since Rubinstein and Sarnak published their influential paper~\cite{RS} (see \cite{FordKonyagin_expository,GranvilleMartin} for exposition of the subject and references).
	In particular, the aim is to study the asymptotic behavior of~$\pi(x; q,a)$; the number of primes~$\leq x$ that are congruent to~$a \bmod q$. For a fixed integer~$q$ and distinct invertible congruence classes~$a_0,a_1,\ldots,a_D \bmod q$, one investigates the  inequalities~$\pi(x;q,a_0)>\pi(x;q,a_1)> \ldots >\pi(x;q,a_D)$. We use the logarithmic density and the terminology of~\cite{MartinNg}.
	
	\begin{defi}\label{Def log density}
	Let~$\mathcal{P}$ be a set of positive real numbers, the \emph{logarithmic density}~$\delta(\mathcal{P})$ of~$\mathcal{P}$ is given by
	\begin{equation}\label{equation def log density}
	\delta(\mathcal{P}) = \lim_{X\rightarrow\infty}\frac{1}{\log X}\int_{1}^{X}\mathbf{1}_{\mathcal{P}} (x)\frac{\diff x}{x} = \lim_{Y\rightarrow\infty}\frac{1}{Y}\int_{0}^{Y}\mathbf{1}_{\mathcal{P}}(e^{y})\diff y
	\end{equation}
	if the limit exists, where~$\mathbf{1}_{\mathcal{P}}$ is the indicator function of the set~$\mathcal{P}$.
	
	For fixed~$q$ and distinct invertible congruence classes~$a_0,a_1,\ldots,a_D \bmod q$, if, for each permutation $\sigma$, the set~$\mathcal{P}_{q;a_{\sigma(0)},a_{\sigma(1)},\ldots,a_{\sigma(D)}} :=\lbrace x \geq 1 : \pi(x;q,a_{\sigma(0)})>\pi(x;q,a_{\sigma(1)})> \ldots >\pi(x;q,a_{\sigma(D)}) \rbrace$  admits a logarithmic density, we say that the prime number race is weakly inclusive.
	\end{defi}

Note that if the limit does not exist in~\eqref{equation def log density}, one can still define the inferior (resp. superior) logarithmic density $\underline{\delta}(\mathcal{P})$ (resp. $\overline{\delta}(\mathcal{P})$) using the lower (resp. upper) limit.
	
	Among other things, Rubinstein and Sarnak~\cite{RS} showed conditionally that all prime number races are weakly inclusive; and in particular they calculated $\delta(\mathcal{P}_{4;3,1})\approx 0.9959$.
	In the general case of a prime number race between congruence classes modulo $q$, this result is conditional on the Generalized Riemann Hypothesis~(GRH) and the Linear Independence~(LI) conjecture for the zeros of the Dirichlet~$L$-functions of the non-principal characters modulo~$q$; that is to say that the non-trivial zeros of these $L$-functions have real part equal to~$\frac{1}{2}$, and that the multi-set
	\begin{align}\label{eq set of zeros prime}
	\mathcal{Z}(q) := \bigcup_{\substack{\chi \bmod q \\ \chi\neq \chi_0}}\lbrace \gamma \geq 0 : L(\tfrac{1}{2} + i\gamma,\chi) = 0 \rbrace
	\end{align} is linearly independent over~$\mathbf{Q}$. 
	
	Later, Martin and Ng~\cite[Th. 1.11(a)]{MartinNg} obtained the weak inclusiveness under a weaker version of the Linear Independence hypothesis. Namely they proved that a prime number race between $D+1$ contestants modulo~$q$ is weakly inclusive if enough independent Dirichlet characters modulo~$q$ have $2D+1$ self-sufficient zeros (i.e. elements of $\lbrace \gamma \geq 0 : L(\tfrac{1}{2} + i\gamma,\chi) = 0 \rbrace$ that are linearly independent of all the other elements of~$\mathcal{Z}(q)$). This condition has been further weakened in~\cite{Devin_Chebyshev} in the case of a prime number race between two contestants~($D=1$). In this paper, we extend the weakening of LI for the weak inclusiveness of prime number races with any fixed number of contestants as a corollary of our main theorem (Corollary~\ref{Cor weakly inclusive}).	

The setting of prime number races has been generalized to the study of a wider range of error terms in prime number theory using almost periodic functions (see e.g. \cite{ANS}).
Applying the explicit formulas and classic conjectures, the error terms in asymptotic formulas for sums of coefficients of $L$-functions at prime numbers can indeed be expressed using an almost periodic-function in the logarithm of the variable. 
We state our main result in this general setting. We study the limiting behavior of an almost periodic function with a finite number of terms and we will see in the proof of Corollary~\ref{Cor weakly inclusive} how to handle the fact that those sum are in general infinite (see Remark~\ref{Rk how to generalize}).
We show that such functions are not locally constant in a broad sense: the values of an almost-periodic function do not stay a positive proportion of the time in a smaller subspace than the one generated by all its values.

\begin{theo}\label{Theorem main primes}
	Let $N\geq 1$ and $\gamma_1,\ldots,\gamma_N >0$ be distinct real numbers.
	Let $D\geq 1$ and
	fix $\mathbf{c}_{1},\ldots, \mathbf{c}_{N} \in \mathbf{C}^{D}$.
	Let $F:\mathbf{R}^{+} \rightarrow \mathbf{R}^{D}$ be the function defined by
	\begin{equation*}
	F(x) = \sum_{n=1}^{N}\left( \mathbf{c}_{n}x^{i\gamma_n} + \overline{\mathbf{c}_{n}}x^{-i\gamma_n}\right) = \sum_{n=1}^{N}\left( \mathbf{a}_{n}\cos(\gamma_n\log x) + \mathbf{b}_{n}\sin(\gamma_n \log x)\right).
	\end{equation*}
	The image of $F$ is contained in a compact subset of the subspace $V_F$ of $\mathbf{R}^D$ generated by the vectors $\mathbf{a}_1, \ldots, \mathbf{a}_{N}, \mathbf{b}_{1}, \ldots, \mathbf{b}_{N}$.

	Then, for every subspace $H\subset\mathbf{R}^{D}$ not containing $V_F$ and every vector $\alpha\in\mathbf{R}^D$, one has $\delta(F \in \alpha + H) = 0$.
	
	In particular, if $V_F\not\subset \bigcup_{d=1}^{D}\lbrace x \in \mathbf{R}^{D} : x_d = 0\rbrace$, then, for every $\alpha_1,\ldots,\alpha_D\in \mathbf{R}$, one has 
	$\delta(F_{d}=\alpha_d) = 0$ for all $1\leq d\leq D$,
	and the logarithmic density	$$\delta(F_{1}>\alpha_1,F_{2}>\alpha_2,\ldots,F_{D}>\alpha_D)$$ exists.	
\end{theo}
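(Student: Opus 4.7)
The plan is to derive everything from the Kronecker--Weyl equidistribution theorem applied to the one-parameter subgroup $y\mapsto(\gamma_1 y,\ldots,\gamma_N y)\bmod 2\pi\mathbf{Z}^N$ of the torus $\mathbf{T}^N=(\mathbf{R}/2\pi\mathbf{Z})^N$. Its orbit closure is a subtorus $G\subset\mathbf{T}^N$ of dimension $k=\dim_{\mathbf{Q}}\langle\gamma_1,\ldots,\gamma_N\rangle$, and the orbit equidistributes on $G$ with respect to the normalized Haar measure $\mu_G$. Writing
\[
\tilde{F}(\theta_1,\ldots,\theta_N)=\sum_{n=1}^{N}\bigl(\mathbf{a}_n\cos\theta_n+\mathbf{b}_n\sin\theta_n\bigr),
\]
one has $F(e^y)=\tilde{F}(\gamma_1 y,\ldots,\gamma_N y)$. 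Consequently, for any Borel $B\subset\mathbf{R}^D$ whose preimage $\tilde{F}^{-1}(B)\cap G$ has topological boundary of $\mu_G$-measure zero, the logarithmic density $\delta(F\in B)$ exists and equals $\mu_G(\tilde{F}^{-1}(B)\cap G)$.

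The heart of the proof is the following fact: if $g(y)=\sum_{n}(\alpha_n\cos\gamma_n y+\beta_n\sin\gamma_n y)$ is nontrivial (some $(\alpha_n,\beta_n)\neq 0$), then $\delta(g=c)=0$ for every $c\in\mathbf{R}$. To see this, pick a $\mathbf{Q}$-basis $\lambda_1,\ldots,\lambda_k$ of $\langle\gamma_1,\ldots,\gamma_N\rangle_{\mathbf{Q}}$ with each $\gamma_n=\sum_j m_{nj}\lambda_j$, $m_{nj}\in\mathbf{Z}$ (obtained by rescaling the basis to clear denominators). Then $g$ lifts to the real trigonometric polynomial
\[
\tilde{g}(\psi)=\sum_{n}\bigl(\alpha_n\cos(\mathbf{m}_n\cdot\psi)+\beta_n\sin(\mathbf{m}_n\cdot\psi)\bigr)
\]
on $\mathbf{T}^k$. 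Because the $\gamma_n>0$ are distinct, the integer vectors $\pm\mathbf{m}_1,\ldots,\pm\mathbf{m}_N$ are pairwise distinct, so linear independence of characters forces $\tilde{g}-c\not\equiv 0$; being real-analytic and non-constant on $\mathbf{T}^k$, its zero set has Lebesgue measure zero. Kronecker--Weyl applied to the equidistributed orbit $y\mapsto(\lambda_1 y,\ldots,\lambda_k y)$ then yields $\delta(g=c)=0$.

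The first assertion of the theorem is then immediate: given $V_F\not\subset H$, choose a linear functional $\ell$ on $\mathbf{R}^D$ vanishing on $H$ but not on $V_F$. The function $\ell\circ F$ has coefficients $\ell(\mathbf{a}_n),\ell(\mathbf{b}_n)$ that are not all zero (otherwise $\ell$ would annihilate $V_F$), so the previous lemma gives $\delta(\ell\circ F=\ell(\alpha))=0$; combined with the inclusion $\{F\in\alpha+H\}\subset\{\ell\circ F=\ell(\alpha)\}$ this concludes.

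For the ``in particular'' statement, the hypothesis $V_F\not\subset\bigcup_d\{x_d=0\}$ is equivalent to $V_F\not\subset\{x_d=0\}$ for each $d$, since a vector space over an infinite field is never a finite union of proper subspaces. The first part then gives $\delta(F_d=\alpha_d)=0$ for all $d$. Since the open orthant $A=\{x\in\mathbf{R}^D:x_d>\alpha_d\text{ for all }d\}$ has topological boundary contained in $\bigcup_d\{x_d=\alpha_d\}$, the preimage $\tilde{F}^{-1}(A)\cap G$ is a $\mu_G$-continuity set, and the standard sandwiching of $\mathbf{1}_A$ between continuous functions on $\mathbf{T}^N$ combined with Kronecker--Weyl yields the existence of the logarithmic density. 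The main obstacle is the nontriviality of $\tilde{g}$ asserted in the second paragraph, which critically uses the distinctness of the frequencies $\gamma_n$ together with the linear independence of characters of the compact torus $\mathbf{T}^k$; the rest is a mechanical application of equidistribution.
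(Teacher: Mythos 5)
Your proof is correct, and it takes a genuinely different route for the key measure-theoretic step. Both arguments begin the same way: reduce to the Kronecker--Weyl equidistribution on the subtorus $A$ (your $G$), pick a linear functional $\ell$ killing $H$ but not $V_F$, and use linear independence of characters (Dedekind--Artin) to see that $\ell\circ\tilde F$ restricted to the subtorus is non-constant. From there the paths diverge. You observe that $\ell\circ\tilde F - \ell(\alpha)$ is a non-constant real-analytic (indeed trigonometric polynomial) function on a torus, hence its zero set is a closed null set, which is automatically a continuity set, and the portmanteau argument applied to the equidistributed orbit finishes the job directly. The paper instead upgrades non-constancy to a quantitative statement --- uniform lower bounds on some partial derivative up to order $K$ over the compact set $(V\cap\mathbf{S})\times A$ (Lemma~\ref{Lemma Not loc constant}) --- feeds this into a van der Corput/oscillatory-integral estimate from Stein to obtain polynomial Fourier decay $|\hat\mu(\xi)|\ll \|\xi\|^{-1/K}$ on $V$ (Lemma~\ref{Lemma epsilon}), and then proves an abstract Proposition~\ref{Prop zero mass} showing that polynomial Fourier decay alone forces zero mass on strict affine subspaces via a Plancherel computation against a tent-function approximation of $\mathbf{1}_{\alpha+H}$. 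Your route is shorter and more elementary --- it avoids the uniformity lemma, the oscillatory-integral machinery, and the Plancherel estimate --- and it suffices for everything the paper actually uses (the corollaries only need that the limiting measure assigns zero mass to affine hyperplanes, which passes to convolutions trivially). What the paper's route buys is a quantitative Fourier-decay rate for $\hat\mu$ and a reusable abstract criterion (Proposition~\ref{Prop zero mass}) decoupled from the specific torus model, which could be of independent interest; your argument yields only the qualitative null-set conclusion. One small point to tighten in your write-up: when you say ``the preimage $\tilde F^{-1}(A)\cap G$ is a $\mu_G$-continuity set,'' the boundary must be taken relative to $G$, and the inclusion $\partial_G(\tilde F^{-1}(A)\cap G)\subset \tilde F^{-1}(\partial A)\cap G$ (using that $A$ is open) should be stated explicitly, but this is routine.
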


\begin{Rk}
	\begin{enumerate}
		\item Note that up to applying a linear transformation to the function~$F$, the set~$\lbrace F_{1}>\alpha_1,F_{2}>\alpha_2,\ldots,F_{D}>\alpha_D \rbrace$ represents any set defined by linear inequalities between the coordinates.
		\item One would rather want to show more generally that for any subset $A$ of Lebesgue measure zero in $V$, the set~$\lbrace x: F(x)\in A\rbrace$ has logarithmic density equal to zero\footnote{Note that under stronger conditions Martin and Ng \cite[Cor. 4.7]{MartinNg} show that the limiting distribution of $F$ is absolutely continuous with respect to the Lebesgue measure on $V$, in particular, it does not assign mass to subsets of $V$ of Lebesgue measure equal to zero.}, but this does not hold when the torus generated by the~$\gamma_n$'s has a smaller dimension than~$V$.
		\item Theorem~\ref{Theorem main primes} is a multi-dimensional version of \cite[Th. 2.2]{Devin_Chebyshev} (see also \cite[Th. 4]{KaczoRama} conditional only on GRH and giving at most two points where the density can be non-zero).
	\end{enumerate}
\end{Rk}

Applying Theorem~\ref{Theorem main primes} to the classical prime number race in congruence classes with $D+1$ contestants yields the following result on weak inclusiveness. 

\begin{cor}\label{Cor weakly inclusive}
	Let $q \geq 3$ be an integer, assume GRH for the Dirichlet~$L$-functions modulo $q$.
	Suppose there exists $M \geq 1$ and $\gamma_{1}, \ldots, \gamma_{M} \in \mathcal{Z}(q)$ relatively independent of $\mathcal{Z}(q) \smallsetminus \lbrace \gamma_{1},\ldots,\gamma_{M} \rbrace$, i.e.
	\[\langle\gamma_{1},\ldots,\gamma_{M}\rangle_{\mathbf{Q}} \cap \langle \mathcal{Z}(q) \smallsetminus \lbrace \gamma_{1},\ldots,\gamma_{M}  \rbrace \rangle_{\mathbf{Q}} = \lbrace 0 \rbrace,\]
	and such that for each non-principal character $\chi \bmod q$, there exists $1\leq m \leq M$ with $L(\frac{1}{2} + i\gamma_m,\chi) = 0$ but $L(\frac{1}{2} + i\gamma_m,\chi') \neq 0$ for $\chi'\neq\chi$.
	
	Then, every prime number race modulo $q$ is weakly inclusive and the ties have density zero.
	\end{cor}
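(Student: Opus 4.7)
The strategy is to reduce to Theorem~\ref{Theorem main primes} via the explicit formula and a convolution argument. For $1\leq d \leq D$, set
$$E_d(y) := \frac{\phi(q)\,e^{y/2}}{y}\bigl(\pi(e^y;q,a_{d-1}) - \pi(e^y;q,a_d)\bigr) - c_d,$$
where $c_d$ is the usual constant from the explicit formula accounting for prime powers and the squares among the $a_i$'s. Under GRH, $E_d$ coincides, up to a uniform $o(1)$ error, with an almost periodic function $H_d$ whose frequencies are the elements of $\mathcal{Z}(q)$ and whose coefficient of $e^{i\gamma y}$, for $\gamma$ a zero of $L(s,\chi)$, is a nonzero constant multiple of $\bigl(\overline{\chi(a_{d-1})} - \overline{\chi(a_d)}\bigr)/(\tfrac{1}{2}+i\gamma)$. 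The race inequality then translates into $E_d(y)>0$ for every $d$.

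Partition $\mathcal{Z}(q) = \{\gamma_1,\ldots, \gamma_M\} \sqcup \mathcal{Z}'$ and write correspondingly $H_d = F_d + G_d$. The vector $F = (F_1,\ldots,F_D)$ has the finite-sum form of Theorem~\ref{Theorem main primes}, and I would first verify that $V_F$ is not contained in any coordinate hyperplane $\{x_d = 0\}$: were it so, then for every $m = 1,\ldots,M$ the coefficient of $e^{i\gamma_m y}$ in $F_d$ would vanish. By the second hypothesis of the corollary, to each non-principal $\chi$ one attaches an index $m(\chi)$ such that $\gamma_{m(\chi)}$ is a zero of $L(s,\chi)$ and of no other $L(s,\chi')$; only $\chi$ then contributes to the coefficient of $e^{i\gamma_{m(\chi)}y}$, so its vanishing forces $\chi(a_{d-1}) = \chi(a_d)$. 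Ranging over all non-principal $\chi$ and invoking orthogonality of characters would give $a_{d-1} \equiv a_d \pmod q$, a contradiction.

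By the standard theory of limiting distributions for almost periodic functions (see~\cite{ANS,MartinNg}), $y\mapsto (E_1(y),\ldots, E_D(y))$ admits a limiting distribution $\mu$ on $\mathbf{R}^D$, and $F$ has a limiting distribution $\mu_F$. The relative $\mathbf{Q}$-linear independence hypothesis $\langle\gamma_1,\ldots,\gamma_M\rangle_{\mathbf{Q}}\cap\langle\mathcal{Z}'\rangle_{\mathbf{Q}} = \{0\}$ implies, via Kronecker--Weyl equidistribution on the torus generated by $\mathcal{Z}(q)$, that $F$ and $G$ are independent under the joint limiting distribution, so $\mu = \mu_F * \mu_G$ (after absorbing the constants $c_d$ into $\mu_G$). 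Theorem~\ref{Theorem main primes} applied to $F$ yields $\mu_F(\{x_d = \beta\}) = 0$ for every $\beta \in \mathbf{R}$ and every $d$, and this vanishing is preserved by convolution with $\mu_G$, so $\mu(\{x_d = \beta\}) = 0$ for all $\beta$ and $d$. Consequently, the topological boundary of each orthant $\{x_1 > \beta_1,\ldots, x_D > \beta_D\}$ has $\mu$-measure zero, and the standard continuity theorem for weak convergence yields the existence of $\delta(\mathcal{P}_{q;a_0,\ldots,a_D})$. Applying the same reasoning to every permutation $\sigma$ proves weak inclusiveness, and $\delta(E_d = 0) = 0$ gives the vanishing of the ties.

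The main obstacle I foresee is the rigorous treatment of the infinite tail $G$: one must show that its partial sums converge to a bona fide random variable that is probabilistically independent of $F$ under $\mu$, which requires $L^2$-style control on the tail (derived from GRH and the Riemann--von Mangoldt formula) together with a careful application of Weyl equidistribution on the closure of the infinite-dimensional torus generated by $\mathcal{Z}(q)$. The almost-periodic framework alluded to in Remark~\ref{Rk how to generalize} is set up precisely to carry out this step cleanly.
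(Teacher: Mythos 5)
Your proposal is correct and follows essentially the same route as the paper: normalize the prime-counting differences via the explicit formula, split the frequency set $\mathcal{Z}(q)$ into the finite block $\{\gamma_1,\ldots,\gamma_M\}$ and the tail, use relative $\mathbf{Q}$-independence to obtain $\mu_E = \mu_F * \mu_G$, apply Theorem~\ref{Theorem main primes} to $\mu_F$, and transfer through the convolution. The only cosmetic difference is that you argue the non-degeneracy hypothesis ($V_F$ not in a coordinate hyperplane) by contradiction coordinate-by-coordinate, while the paper notes directly that the vectors $(\chi(a_1)-\chi(a_0),\ldots,\chi(a_D)-\chi(a_{D-1}))$, as $\chi$ ranges over characters mod $q$, span $\mathbf{C}^D$ by orthogonality; both are instances of the same observation, and both correctly rely on the second hypothesis to ensure every non-principal character actually contributes to $F$.
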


Note that, although Corollary~\ref{Cor weakly inclusive} is conditional on the generalized Riemann Hypothesis, the condition needed on the zeros is definitely weaker than the Linear Independence hypothesis that is used in \cite{RS}. It is also weaker than the condition in \cite[Th. 1.10(a)]{MartinNg} since it allows rational linear relations between the $\gamma_n$'s. 
In particular, the second part of the assumption is automatic if the zeros are simple in the multi-set $\mathcal{Z}(q)$.
However we do not know if this condition is easier to prove than the Linear Independence hypothesis: it still involves knowledge on all the zeros. 
Such a condition can be studied and checked more easily on function fields since, in this setting, the $L$-functions have finitely many zeros. This idea is carried out in Section~\ref{Sec Poly} and we obtain analogues of Corollary~\ref{Cor weakly inclusive} that are somewhat stronger in function fields.

Since the counting functions $\pi(\cdot;q,a)$ are step functions with jumps at prime numbers, it could seem more natural to use a logarithmic density adapted to the set of primes instead of the one from Definition~\ref{Def log density} which is adapted to the set $[1,\infty)$. In \cite{LMP}, Lichtman Martin and Pomerance show that, in the case of a prime number race with two contestants, the two densities exist and are equal as long as the ties have density zero.
Combining Theorem~\ref{Theorem main primes} with \cite[Th. 2.1]{LMP}, we deduce a weak condition ensuring that the two densities exist simultaneously (and thus are equal).

\begin{cor}
	\label{Cor existence of density sum}
	Let $q \geq 3$ be an integer, assume GRH for the Dirichlet~$L$-functions modulo $q$.
Under the same conditions as in Corollary~\ref{Cor weakly inclusive}, we have that, 
	for any distinct invertible classes~$a_0,a_1,\ldots,a_D$~modulo~$q$,
	\begin{align*}
	\lim_{X\rightarrow\infty}\frac{1}{\log X} \sum_{\substack {p \in \mathcal{P}_{q;a_0,a_1,\ldots,a_D} \\ p\leq X}}\frac{\log p}{p} = \delta(\mathcal{P}_{q;a_0,a_1,\ldots,a_D}).
	\end{align*}
\end{cor}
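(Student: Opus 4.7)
The plan is to deduce the corollary by combining Corollary~\ref{Cor weakly inclusive} with~\cite[Th.~2.1]{LMP}. From Corollary~\ref{Cor weakly inclusive}, the logarithmic density $\delta(\mathcal{P}_{q;a_0,\ldots,a_D})$ exists under our hypotheses. In parallel, by applying Theorem~\ref{Theorem main primes} in the one-dimensional setting ($D=1$, $\alpha=0$) to the almost-periodic function arising from the explicit formula for the normalized difference $\pi(x;q,a_i)-\pi(x;q,a_j)$, I would deduce that every pairwise tie set $\{x:\pi(x;q,a_i)=\pi(x;q,a_j)\}$ has logarithmic density zero. The extension from finite almost-periodic sums (as in Theorem~\ref{Theorem main primes}) to the infinite sums coming from $L$-function zeros can be handled by the same truncation argument used in the proof of Corollary~\ref{Cor weakly inclusive}.

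Next, writing $\mathcal{P}_{q;a_0,\ldots,a_D} = \bigcap_{i=1}^D \mathcal{P}_{q;a_{i-1},a_i}$, the indicator $\mathbf{1}_{\mathcal{P}_{q;a_0,\ldots,a_D}}$ is a step function with jumps only at primes, and its jump locus is contained in the union of the pairwise tie sets---hence in a set of logarithmic density zero. This is precisely the configuration in which~\cite[Th.~2.1]{LMP} provides the conversion between $\frac{1}{\log X}\int_1^X \mathbf{1}_{\mathcal{P}}(x)\,dx/x$ and $\frac{1}{\log X}\sum_{p\leq X}\mathbf{1}_{\mathcal{P}}(p)\log p/p$, via Abel summation together with Mertens' theorem. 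Applying this conversion then yields the claimed equality.

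The main obstacle is justifying that~\cite[Th.~2.1]{LMP}, originally formulated in the two-contestant race setting, applies in the present multi-contestant situation. I expect this to be essentially routine: the underlying argument uses only that $\mathbf{1}_{\mathcal{P}}$ is piecewise constant with jumps at primes and that the locus of possible jumps has logarithmic density zero, and both properties are stable under passing to a finite intersection of two-contestant races. If a direct application is not available, I would instead decompose $\mathbf{1}_{\bigcap_i \mathcal{P}_{q;a_{i-1},a_i}}$ by inclusion--exclusion into a finite $\mathbf{Z}$-linear combination of indicators of unions of pairwise races, and apply the two-contestant version of~\cite[Th.~2.1]{LMP} termwise, using linearity of both densities and the fact that the tie set of each union is still contained in a density-zero set.
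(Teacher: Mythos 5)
Your approach is genuinely different from the paper's. The paper's proof is direct: it adapts \cite[Th.~2.1]{LMP} to $\mathbf{R}^D$-valued functions and applies it to the vector-valued error function $E$ of \eqref{eq prime number race}; the only new ingredient required by the adaptation is the continuity estimate
\[
\lim_{\eta\to 0}\mu_{E}\bigl(\lbrace x : x_{d}>\alpha_d + \eta,\ 1\leq d \leq D\rbrace\bigr) = \mu_{E}\bigl(\lbrace x : x_{d}>\alpha_d,\ 1\leq d \leq D\rbrace\bigr),
\]
which follows from the fact (established via Theorem~\ref{Theorem main primes} and Proposition~\ref{Prop zero mass}) that $\mu_E$ gives no mass to affine hyperplanes, together with the Dominated Convergence Theorem. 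In particular the paper does not decompose the multi-way race into pairwise races at all.

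Your proposed reduction via inclusion--exclusion has a gap. The identity $\mathbf{1}_{\bigcap_i A_i} = \sum_{\emptyset\neq S}(-1)^{\lvert S\rvert+1}\mathbf{1}_{\bigcup_{i\in S}A_i}$ is correct, but each set $\bigcup_{i\in S}\mathcal{P}_{q;a_{i-1},a_i}$ is of the form $\lbrace \max_{i\in S}E_i>0\rbrace$, which is not a two-contestant race, so a termwise ``direct application of the two-contestant version of \cite[Th.~2.1]{LMP}'' is not available; you would still need to verify that LMP's argument extends to this class of functions, which is essentially the same kind of adaptation the paper carries out, only packaged differently. More importantly, the sufficient condition you cite for invoking LMP (that $\mathbf{1}_{\mathcal{P}}$ is a step function with jumps at primes and the locus of possible jumps has logarithmic density zero) is not the hypothesis the theorem requires: the jump locus is a discrete set of primes and trivially has density zero, which would make the hypothesis vacuous. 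The operative condition in LMP is that the tie set, i.e., the locus where the associated error function vanishes, has logarithmic density zero --- equivalently, that the limiting distribution places no mass on the boundary. That is the nontrivial input that Theorem~\ref{Theorem main primes} is needed to verify, and it is precisely the multi-dimensional version of this that the paper establishes.
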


The proof of Theorem~\ref{Theorem main primes} is based on the fact that the limit behavior of an almost-periodic function is well described by its limiting distribution. 
Let us first recall the definition.
\begin{defi}\label{Def limiting log distribution}
	Let $F:\mathbf{R}\rightarrow\mathbf{R}^{D}$ be a real function. 
	If it exists, the \emph{limiting logarithmic distribution} of the function $F$ is the  probability measure $\mu$ on $\mathbf{R}^{D}$ such that
	for any bounded Lipschitz continuous function $g$, we have
	\begin{align*}
	\lim_{X\rightarrow\infty}\frac{1}{\log X}\int_{1}^{X}g(F(x))\frac{\diff x}{x} = 
	\int_{\mathbf{R}^{D}}g\diff\mu.
	\end{align*}
\end{defi}

The existence of the limiting logarithmic distribution of an almost-periodic function $F$ with a finite number of terms, as in Theorem~\ref{Theorem main primes}, is a consequence of the Kronecker--Weyl Equidistribution Theorem as in \cite[Lem. 4.3]{Humphries} (see also \cite[Lem. 4.8]{Devin_Chebyshev}).

Thus, in general, studying a prime number race boils down to understanding the associated limiting logarithmic distribution.
We are mainly interested in the support and the regularity of this distribution.

If the distribution has a large enough support, for example intersecting all sectors, then one can prove that every contestant in the prime number race take the lead for a positive proportion of the time. This idea was developed in \cite[Th. 1.2]{RS} which is conditional only on the generalized Riemann Hypothesis only but works for a certain type of prime number races. In \cite[Rk.1.3]{RS} a more general result conditional on the generalized Riemann Hypothesis and the Linear Independence is stated, this result was improved in \cite[Th. 1.11(b)]{MartinNg} by weakening the hypothesis on the linear independence of the zeros. 

If the distribution does not have atoms, or does not concentrate mass on strict subspaces, then one can prove that ``the ties have density zero'', this also ensure the existence of logarithmic density (defined by the limit in \eqref{equation def log density}) for all set $\mathcal{P}$ defined by linear inequalities between the coordinates. In \cite{RS}, Rubinstein and Sarnak proved, under the Linear Independence hypothesis, that the distribution associated to any prime number race in congruence classes is smooth. Martin and Ng (\cite{MartinNg}) showed under a weaker assumption that this distribution is absolutely continuous. Both the results of Rubinstein--Sarnak and Martin--Ng imply that all measurable sets of Lebesgue measure\footnote{The Lebesgue measure we refer to in this case could be the Lebesgue measure on $\mathbf{R}^D$ or on a proper subspace, see the discussion in the end of \cite[Sec. 2.1]{MartinNg} for more details.} zero are also null sets for this distribution. The main point of Theorem~\ref{Theorem main primes} is that we give the weakest necessary condition to ensure that the distribution associated to a general prime number race with an arbitrary large number of contestants does not concentrate mass on strict subspaces of its support (without more regularity).


	\section{Chebyshev's bias in function fields}\label{Sec Poly}
	
	The analogy	between $\mathbf{Z}$ and rings of transcendence degree~$1$ over finite fields (see e.g. \cite{Rosen2002}) provides a natural translation of the notion of prime number races to irreducible polynomials with coefficients in finite fields.
	This translation was first studied by Cha in \cite{Cha2008} where he adapted the results of \cite{RS}. Further generalizations have been studied since then in \cite{ChaKim,ChaIm,CFJ,DevinMeng,Perret-Gentil}. 
	In this setting, some results are unconditional since the Riemann Hypothesis in function field of curves over finite fields is a theorem of Weil \cite{Weil_RH}.
	Moreover the hypothesis of linear independence is proven in generic cases \cite{Kowalski2010,CFJ_Indep,Perret-Gentil} but counterexamples can be found \cite{Cha2008,Kowalski2010,Li2018}.

In this context, instead of counting primes up to $x$ (a real number) we count irreducible monic polynomials of degree up to $k$ (an integer), it is natural to change the variable $x = q^k$ where $q$ is the cardinal of the field of coefficients. Thus we use natural densities instead of logarithmic densities, and we use a sum instead of an integral in the definition.
Note, however, that --- as observed in Corollary~\ref{Cor existence of density sum} --- the use of a sum or an integral in Definitions~\ref{Def log density} and~\ref{Def limiting log distribution} does not modify the results and values when the limiting logarithmic distribution does not give mass to hyperplanes.

\begin{defi}
	Let~$\mathcal{P}$ be a set of positive integers, the \emph{(natural) density}~$\dens(\mathcal{P})$ of~$\mathcal{P}$ is given by
	\begin{equation*}
	\dens(\mathcal{P}) = \lim_{X\rightarrow\infty}\frac{1}{X}\sum_{k\leq X}\mathbf{1}_{\mathcal{P}}(k) 
	\end{equation*}
	if the limit exists, where~$\mathbf{1}_{\mathcal{P}}$ is the indicator function of the set~$\mathcal{P}$.
\end{defi}

We will usually omit the qualifier ``natural'' in the rest of the paper, except when comparing with the logarithmic density.
Similarly to the logarithmic density, one can always define the inferior (resp. superior) density $\underline{\dens}$ (resp. $\overline{\dens}$). 

We have the following analogue of Theorem~\ref{Theorem main primes}.

\begin{theo}\label{Theorem main poly}
	Let $N\geq 1$ and $\gamma_1,\ldots,\gamma_N \in (0,\pi) \smallsetminus \mathbf{Q}\pi$ be distinct real numbers.
	Let $D\geq 1$ and
	fix $\mathbf{c}_{1},\ldots, \mathbf{c}_{N} \in \mathbf{C}^{D}$.
	Let $F:\mathbf{N} \rightarrow \mathbf{R}^{D}$ be the function defined by
	\begin{equation*}
	F(k) = \sum_{n=1}^{N}\left(\mathbf{c}_{n}e^{ik\gamma_n} + \overline{\mathbf{c}_{n}}e^{-ik\gamma_n}\right) = 
	\sum_{n=1}^{N}\left( \mathbf{a}_{n}\cos(\gamma_n k) + \mathbf{b}_{n}\sin(\gamma_n k)\right).
	\end{equation*}
	The image of $F$ is contained in a compact subset of the subspace $V_F$ of $\mathbf{R}^D$ generated by the vectors $\mathbf{a}_1, \ldots, \mathbf{a}_{N}, \mathbf{b}_{1}, \ldots, \mathbf{b}_{N}$. 

	Then, for every subspace $H\subset\mathbf{R}^{D}$ not containing $V_F$ and every vector $\alpha\in\mathbf{R}^D$, one has $\dens(F \in \alpha + H) = 0$.

In particular, if $V_F\not\subset \bigcup_{d=1}^{D}\lbrace x \in \mathbf{R}^{D} : x_d = 0\rbrace$, then, for every $\alpha_1,\ldots,\alpha_D\in \mathbf{R}$, one has 
$\dens(F_{d}=\alpha_d) = 0$ for all $1\leq d\leq D$,
and the density	$$\dens(F_{1}>\alpha_1,F_{2}>\alpha_2,\ldots,F_{D}>\alpha_D)$$ exists.	
\end{theo}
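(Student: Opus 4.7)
The plan is to adapt the proof of Theorem~\ref{Theorem main primes} to the discrete setting, using integer Kronecker--Weyl equidistribution in place of its continuous counterpart. Set $v = (\gamma_1, \ldots, \gamma_N)$ and let $T$ denote the closure in $\mathbf{T}^N := (\mathbf{R}/2\pi\mathbf{Z})^N$ of the orbit $\{kv : k \in \mathbf{N}\}$; this is a closed, possibly disconnected, subgroup of $\mathbf{T}^N$ on which the sequence $(kv)_{k \geq 0}$ equidistributes against its normalized Haar measure $m_T$. Introducing the continuous trigonometric map
\[
\tilde F : \mathbf{T}^N \to \mathbf{R}^D, \qquad \tilde F(\theta) = \sum_{n=1}^N \bigl(\mathbf{a}_n\cos\theta_n + \mathbf{b}_n\sin\theta_n\bigr),
\]
one has $F(k) = \tilde F(kv)$, so the image of $F$ sits inside the compact set $\tilde F(T) \subset V_F$; this already yields the first assertion.

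Equidistribution then translates $\dens(F \in \alpha + H)$ into $m_T(\{\theta \in T : \tilde F(\theta) \in \alpha + H\})$, provided the latter set is an $m_T$-continuity set. Since $H$ does not contain $V_F$, I would pick a linear functional $\ell \in (\mathbf{R}^D)^*$ that vanishes on $H$ but is nonzero on $V_F$, reducing the problem to showing that the zero set on $T$ of the trigonometric polynomial
\[
g(\theta) := \ell(\tilde F(\theta)) - \ell(\alpha) = -\ell(\alpha) + \sum_{n=1}^N \bigl(\ell(\mathbf{a}_n)\cos\theta_n + \ell(\mathbf{b}_n)\sin\theta_n\bigr)
\]
has $m_T$-measure zero. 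Note that $g$ is not identically zero on $\mathbf{T}^N$, because some $\ell(\mathbf{a}_n)$ or $\ell(\mathbf{b}_n)$ is nonzero.

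The main step will be to check that $g$ does not vanish identically on any coset of the identity component $T_0$ of $T$: since $g$ is real-analytic and $T_0$ is a subtorus, this forces $\{g = 0\} \cap T$ to have $m_T$-measure zero on each coset, hence globally. For this I would analyze the Fourier expansion of $g$ restricted to $T_0$: the modes $0, \pm e_1, \ldots, \pm e_N$ appearing in $g$ descend to characters of $T_0$, and two such modes $m, m'$ give the same character exactly when $(m - m')\cdot v \in 2\pi\mathbf{Q}$. The hypothesis $\gamma_n \in (0,\pi) \setminus \mathbf{Q}\pi$ rules out each $\pm e_n$ becoming trivial on $T_0$; a careful bookkeeping of the remaining possible coincidences $\pm\gamma_n \pm \gamma_m \in 2\pi\mathbf{Q}$ then leaves at least one nonzero Fourier coefficient on any translate. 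This is the main obstacle, and is precisely where the discrete setting is strictly harder than the continuous one of Theorem~\ref{Theorem main primes}.

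The ``in particular'' second part follows formally. Applying the main claim with $H = \{x_d = 0\}$ and $\alpha = \alpha_d e_d$ gives $\dens(F_d = \alpha_d) = 0$ for each $d$; hence the region $\{F_1 > \alpha_1, \ldots, F_D > \alpha_D\}$ is a continuity set of $\tilde F_* m_T$, and one last invocation of Kronecker--Weyl gives the existence of its density as the Haar measure of its preimage in $T$.
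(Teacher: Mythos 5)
Your approach is genuinely different from the paper's. The paper does not describe the zero set of $\tilde{F}$ directly. Instead it proves (Lemma~\ref{Lemma Not loc constant}, via Dedekind--Artin linear independence of characters) a uniform lower bound on some partial derivative of order at most $K$ of $\langle\mathbf{u},\tilde{F}(\theta)\rangle$, valid for all $(\mathbf{u},\theta)\in(V_F\cap\mathbf{S})\times A$; it then feeds this into van der Corput's lemma to obtain a power-saving decay $|\hat{\mu}(\xi)|\ll \|\xi\|^{-1/K}$ for $\xi\in V_F$ (Lemma~\ref{Lemma epsilon}); and finally Proposition~\ref{Prop zero mass} turns that Fourier decay, via Plancherel against a triangular bump along the affine subspace, into $\mu(\alpha+H)=0$. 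Your plan replaces this quantitative chain by the qualitative statement that the zero locus of a non--identically-zero real-analytic function on a connected torus has Haar measure zero, pushed coset by coset over $T/T_0$. When it works, that is cleaner; but it forces you to control the restriction of $g$ to each component of $T$ separately, which is exactly where the paper instead works with the full group $A$.

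And there is a genuine gap precisely at the step you flag as ``the main obstacle.'' The proposed ``careful bookkeeping'' of collisions $\pm\gamma_n\pm\gamma_{n'}\in 2\pi\mathbf{Q}$ does not close it. The hypothesis $\gamma_n\notin\mathbf{Q}\pi$ only forces each $e^{\pm i\theta_n}$ to be nontrivial on $T_0$; it does not exclude relations such as $\gamma_n-\gamma_{n'}\in 2\pi\mathbf{Q}$, which both disconnect $T$ and cause $e^{i\theta_n}$ and $e^{i\theta_{n'}}$ to descend to the \emph{same} character of $T_0$, differing only by a coset-dependent phase. In that case the contributions of $\mathbf{c}_n$ and $\mathbf{c}_{n'}$ to $g$ restricted to a particular coset $\theta^0+T_0$ can cancel identically while surviving on the other cosets; a two-term example with $\gamma_2=\gamma_1+\tfrac{2\pi}{3}$ and suitably chosen $\mathbf{c}_1,\mathbf{c}_2$ exhibits exactly this. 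So ``at least one nonzero Fourier coefficient on any translate'' is not automatic, and this is the heart of the matter rather than bookkeeping. Note, for contrast, that the paper's Dedekind--Artin step is run on the whole group $A$, where the constraint that the $\gamma_n$ are distinct in $(0,\pi)$ makes the characters $1,e^{\pm i\theta_1},\ldots,e^{\pm i\theta_N}$ pairwise distinct on $A$ with no collisions; you should examine carefully how the paper then passes from non-constancy on $A$ to the uniform pointwise derivative bound asserted in Lemma~\ref{Lemma Not loc constant}, because that is exactly the coset-level statement your approach also requires.
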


\begin{Rk}\begin{enumerate}
	\item Note that the difference between Theorem~\ref{Theorem main primes} and Theorem~\ref{Theorem main poly} is only on the set of possible values for the $\gamma_n$'s. When studying the limiting distribution of an almost-periodic function over the integers, the almost-periods commensurable to $\pi$ play a special role as $\lbrace k\pi : k\in\mathbf{Z}\rbrace/2\pi\mathbf{Z}$ is a discrete subgroup of $\mathbf{Z}/2\pi\mathbf{Z}$.
	This specificity does not appear in Theorem~\ref{Theorem main primes} since the set $\lbrace \pi\log x : x\in\mathbf{R}\rbrace/2\pi\mathbf{Z}$ is dense in $\mathbf{Z}/2\pi\mathbf{Z}$.
	\item It is expected --- and proved in some cases \cite{Kowalski2010,CFJ_Indep} --- that the Linear Independence hypothesis holds generically in families of $L$-functions. Thus it is natural to think that the condition of Theorem~\ref{Theorem main poly} is generically satisfied in various versions of Chebyshev's bias over function fields, and probably it holds more often than the Linear Independence hypothesis.
\end{enumerate}
\end{Rk}

As in the case of Theorem~\ref{Theorem main primes}, our results will in fact be about the regularity of the limiting distribution of our almost-periodic function.

\begin{defi}
	Let $F:\mathbf{N}\rightarrow\mathbf{R}^{D}$ be a real function. 
	If it exists, the \emph{limiting (natural) distribution} of the function $F$ is the probability measure $\mu$ on $\mathbf{R}^{D}$ such that
	for any bounded Lipschitz continuous function $g$, we have
	\begin{align*}
	\lim_{X\rightarrow\infty}\frac{1}{X}\sum_{k\leq X}g(F(k)) = 
	\int_{\mathbf{R}}g(t)\diff\mu(t).
	\end{align*}
\end{defi}

We obtain the analogue of Corollary~\ref{Cor weakly inclusive} in function fields, where similarly to \eqref{eq set of zeros prime}, we define the multi-set of ``non-negative'' zeros of $L$-functions modulo $Q\in\mathbf{F}_{p^{\alpha}}$, 
\begin{align*}
\mathcal{Z}(Q) := \bigcup_{\substack{\chi \bmod Q \\ \chi\neq \chi_0}}\lbrace \gamma \in [0,\pi] : L(\tfrac{1}{2} + i\gamma,\chi) = 0 \rbrace.
\end{align*}
	\begin{cor}\label{cor weakly inclusive poly}
	Let $p^{\alpha}$ be a power of prime, and $Q \in \mathbf{F}_{p^{\alpha}}[t]$. 
	If there exists $M \geq 1$ and $\gamma_{1}, \ldots, \gamma_{M} \in \mathcal{Z}(Q)\smallsetminus \mathbf{Q}\pi$ 
	such that, for each character $\chi \bmod Q$, there exists $1\leq m \leq M$ with $L(\frac{1}{2} + i\gamma_m,\chi) = 0$ but $L(\frac{1}{2} + i\gamma_m,\chi') \neq 0$ for $\chi'\neq\chi$.
	
	Then, every irreducible polynomial race in $\mathbf{F}_{p^{\alpha}}$ modulo $Q$ is weakly inclusive and the ties have density zero.
\end{cor}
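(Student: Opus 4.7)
The plan is to follow the strategy sketched for Corollary~\ref{Cor weakly inclusive}, with Theorem~\ref{Theorem main poly} replacing Theorem~\ref{Theorem main primes}. A crucial simplification of the function-field case is that, by Weil's Riemann Hypothesis, each Dirichlet $L$-function modulo~$Q$ is a polynomial in $u = p^{-\alpha s}$ with all zeros on $|u|=p^{-\alpha/2}$, so the explicit formula yields an honest \emph{finite} almost-periodic expansion. First I would write
\[
F(k) := \bigl(\pi(k;Q,a_d)-\pi(k;Q,a_0)\bigr)_{d=1}^{D}
\]
in the form of a constant $C$ plus
\[
\sum_{\chi\neq\chi_0}\mathbf{v}_\chi\sum_{\substack{\gamma\in[0,\pi]\\ L(\frac12+i\gamma,\chi)=0}}b_{\chi,\gamma}\,e^{ik\gamma}+\overline{(\cdots)},\qquad \mathbf{v}_\chi:=\bigl(\overline{\chi(a_d)}-\overline{\chi(a_0)}\bigr)_{d=1}^{D}\in\mathbf{C}^{D},
\]
with nonzero scalars $b_{\chi,\gamma}$ depending only on the zero.

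Because Theorem~\ref{Theorem main poly} requires frequencies in $(0,\pi)\smallsetminus\mathbf{Q}\pi$, I would then split $F=C+F^{(1)}+F^{(2)}$, where $F^{(2)}$ groups the contribution of zeros in $\mathbf{Q}\pi$. The function $F^{(2)}$ is periodic in $k$ with some period $T$, so on each arithmetic progression $k=Tm+r$ the value $F^{(2)}(Tm+r)=v_r$ is constant in $m$, while $\widetilde F^{(1)}_r(m):=F^{(1)}(Tm+r)$ is almost-periodic in $m$ with frequencies $T\gamma\bmod 2\pi$ for $\gamma\in\mathcal{Z}(Q)\smallsetminus\mathbf{Q}\pi$. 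Since $\gamma\notin\mathbf{Q}\pi$, these reductions land in $(0,2\pi)\smallsetminus\mathbf{Q}\pi$, and folding $(\pi,2\pi)$ into $(0,\pi)$ by $\mathbf{c}_n\leftrightarrow\overline{\mathbf{c}_n}$ satisfies the hypothesis of Theorem~\ref{Theorem main poly}. To verify the spanning condition, fix a coordinate hyperplane of the form $\{x_d=0\}$ or $\{x_d=x_{d'}\}$: since the corresponding classes differ in $(\mathbf{F}_{p^\alpha}[t]/Q)^{\times}$, orthogonality of characters supplies a non-principal $\chi$ that separates them, and by hypothesis some self-sufficient $\gamma_m$ picks out exactly this $\chi$. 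The coefficient of $e^{ik\gamma_m}$ in $F^{(1)}$ is then a nonzero scalar multiple of $\mathbf{v}_\chi$ whose relevant coordinate is nonzero, so $V_{\widetilde F^{(1)}_r}$ escapes the hyperplane. Applying Theorem~\ref{Theorem main poly} on each progression and averaging over $r=0,1,\ldots,T-1$ yields $\dens(\text{tie})=0$ and existence of the ordering densities.

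The main obstacle is the possibility of coincidences $T\gamma_m\equiv\pm T\gamma\pmod{2\pi}$ among the reduced frequencies, which would merge the clean contribution of a self-sufficient zero $\gamma_m$ with that of another zero $\gamma$ and could, in principle, cancel the relevant coordinate of the combined coefficient on specific residues $r$. Such a coincidence forces the strong arithmetic relation $\gamma_m\mp\gamma\in\mathbf{Q}\pi$, which constrains $\gamma$ severely; one then verifies that, after combining all colliding terms, the span $V_{\widetilde F^{(1)}_r}$ still escapes the offending hyperplane for every~$r$, either by enlarging $T$ to a common denominator that separates all such relations (possible since $\mathcal{Z}(Q)$ is finite) or by a joint Kronecker--Weyl argument lifting the equidistribution to the full torus generated by the~$\gamma$'s, in the spirit of the two-contestant treatment in~\cite{Devin_Chebyshev}.
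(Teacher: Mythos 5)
Your proposal takes a genuinely different route from the one the paper intends. The paper's proof of Corollary~\ref{Cor weakly inclusive}, which the text says Corollary~\ref{cor weakly inclusive poly} follows, splits $E=F+G$ where $F$ carries \emph{only} the distinguished zeros $\gamma_1,\ldots,\gamma_M$ and $G$ carries everything else (both the $\mathbf{Q}\pi$ zeros and the remaining irrational ones). Relative independence yields $\mu_E=\mu_F\ast\mu_G$, Theorem~\ref{Theorem main poly} applied to the small, clean sum $F$ shows $\mu_F$ gives no mass to affine hyperplanes (spanning via orthogonality of characters), and convolution transfers this to $\mu_E$. You instead split by whether the frequency lies in $\mathbf{Q}\pi$, push \emph{all} non-$\mathbf{Q}\pi$ zeros into $F^{(1)}$, and restrict to arithmetic progressions where the periodic piece $F^{(2)}$ is constant.

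Two concerns. First, your argument makes no use of a relative independence hypothesis. The printed statement of Corollary~\ref{cor weakly inclusive poly} does omit one, but the paper's own proof (``follows the same lines'' as Corollary~\ref{Cor weakly inclusive}) needs the analogue of $\langle\gamma_1,\ldots,\gamma_M\rangle_{\mathbf{Q}}\cap\langle\mathcal{Z}(Q)\smallsetminus\{\gamma_m\},\,\pi\rangle_{\mathbf{Q}}=\{0\}$ for the convolution step; you cannot simply drop it and expect the same conclusion without a new idea.

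Second, the gap you flag at the end is real, and neither of your suggested fixes closes it. A collision $T\gamma_m\equiv\pm T\gamma\pmod{2\pi}$ occurs exactly when $\gamma_m\mp\gamma\in\mathbf{Q}\pi$; this is a fixed arithmetic feature of $\mathcal{Z}(Q)$, not something you control by choosing $T$. Worse, ``enlarging $T$'' is backwards: $T$ is constrained to be a multiple of the denominators of the $\mathbf{Q}\pi$ zeros, and multiplying $T$ only makes $T(\gamma_m\mp\gamma)$ more likely to land in $2\pi\mathbf{Z}$, not less. After a collision the merged coefficient on the progression $r$ has the form $\mathbf{c}_{\gamma_m}e^{ir\gamma_m}+\mathbf{c}_{\gamma}e^{ir\gamma}$ (or a conjugate variant), and while this cannot vanish for \emph{every} $r$, it can vanish in a given coordinate for a \emph{specific} $r$. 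Since $\dens(F=\alpha)=\frac1T\sum_{r}\dens_m(\tilde F^{(1)}_r=\alpha-C-v_r)$, a single bad $r$ contributes a positive term and ruins the conclusion. The paper's convolution route avoids the problem entirely: under relative independence no relation $\gamma_m\mp\gamma\in\mathbf{Q}\pi$ can hold once $\pi$ lies in the span of the $\mathbf{Q}\pi$ zeros, and Theorem~\ref{Theorem main poly} is applied directly to the finite sum over $\gamma_1,\ldots,\gamma_M$ with no $\bmod\ 2\pi$ reduction and hence no collisions to control.
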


The proof follows the same lines as the proof of Corollary~\ref{Cor weakly inclusive}. Note that this also applies to races between polynomials with a fixed number of irreducible factors in congruence classes as in \cite{DevinMeng} since the asymptotic formulas depend similarly on the zeros of the Dirichlet $L$-functions. 
 
In \cite{Kowalski2010}, Kowalski proved a generic Linear Independence for the zeros of Dirichlet~$L$-functions of quadratic characters in function fields. Together with Theorem~\ref{Theorem main poly}, this gives unconditionally the existence of Chebyshev's bias in many races between quadratic residues and non-quadratic residues.   
In particular, the following result is a direct consequence of Theorem~\ref{Theorem main poly} in the case $D=1$ with \cite[Prop. 1.1]{Kowalski2010} and \cite[(5)]{DevinMeng}. 

\begin{cor}\label{Cor generic existence of the bias}
	Let $f \in \mathbf{Z}[T]$ be a squarefree monic polynomial of degree $2g$, where
	$g \geq 1$ is an integer. Let $p$ be an odd prime such that $p$ does not divide the discriminant
	of $f$, and let $U/\mathbf{F}_p$ be the open subset of the affine line, defined by $f(u)\neq 0$. 
	For any extension~$\mathbf{F}_q /\mathbf{F}_p$, any~$u\in U(\mathbf{F}_q)$, and any subset~$A$ of the invertible classes in $\mathbf{F}_q[T]/(f(T)(T-u))$ we denote
	\begin{equation*}
	\Pi_{k}(x;u,A) :=\frac{1}{\lvert A\rvert}\left\lvert \left\lbrace N \in \mathbf{F}_{q}[T] \text{ monic: } \substack{\deg{N} \leq x, \\ \Omega(N)=k }, N \bmod f(T)(T-u) \in A \right\rbrace\right\rvert.
	\end{equation*}

Then, denoting $\square$ (resp. $\boxtimes$) the subset of quadratic residues (resp. non-quadratic residues), we have
\begin{align*}
\lvert \lbrace u \in U(\mathbf{F}_q): \dens(\Pi_{k}(\cdot;u,\square) > \Pi_{k}(\cdot;u,\boxtimes)) \text{ does not exists } \rbrace\rvert \ll_{g} q^{1 - \frac{1}{4g^2 + 2g +4}}\log q.
\end{align*}

\end{cor}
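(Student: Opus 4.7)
The plan is to apply Theorem~\ref{Theorem main poly} with $D=1$ to the (appropriately renormalized) difference
\begin{equation*}
F(x) := \Pi_{k}(x;u,\square) - \Pi_{k}(x;u,\boxtimes)
\end{equation*}
viewed as a function of the integer degree bound $x$. The first step is to invoke the explicit formula \cite[(5)]{DevinMeng} for $\Pi_{k}$ to expand $F$ (up to a negligible main-term contribution) as
\begin{equation*}
F(x) = \sum_{n=1}^{N}\bigl(c_{n}e^{ix\gamma_{n}} + \overline{c_{n}}e^{-ix\gamma_{n}}\bigr),
\end{equation*}
where the $\gamma_{n}\in[0,\pi]$ are the imaginary parts of the non-trivial zeros of the Dirichlet $L$-functions modulo $f(T)(T-u)$ picked out by the quadratic character distinguishing $\square$ from $\boxtimes$, and the $c_{n}\in\mathbf{C}$ are the corresponding coefficients. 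The Riemann Hypothesis of Weil guarantees that every $\gamma_{n}$ is real, so $F$ has precisely the shape of the almost-periodic function appearing in Theorem~\ref{Theorem main poly}.

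The second step is to apply Kowalski's quantitative generic linear independence result \cite[Prop. 1.1]{Kowalski2010}. That proposition asserts that the set of $u\in U(\mathbf{F}_{q})$ for which the $\gamma_{n}$'s fail to be $\mathbf{Q}$-linearly independent modulo $\pi$ has cardinality $\ll_{g} q^{1-\frac{1}{4g^{2}+2g+4}}\log q$ — and for any $u$ outside this exceptional set, the $\gamma_{n}$'s are automatically pairwise distinct and none of them lies in $\mathbf{Q}\pi$. This is exactly the input required to apply Theorem~\ref{Theorem main poly}.

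For such ``good'' $u$ the subspace $V_{F}\subset\mathbf{R}$ is nonzero (the relevant quadratic $L$-function has non-trivial zeros once $q$ is large enough, and otherwise $F\equiv 0$ and the density $\dens(F>0)=0$ exists trivially), so $V_{F}=\mathbf{R}$ and the final statement of Theorem~\ref{Theorem main poly} yields $\dens(F=0)=0$ together with the existence of $\dens(F>0)$. Unwinding definitions, $\dens(\Pi_{k}(\cdot;u,\square) > \Pi_{k}(\cdot;u,\boxtimes))$ exists for every good $u$, so the set of $u$ for which it fails to exist is contained in Kowalski's exceptional set and hence has size $\ll_{g} q^{1-\frac{1}{4g^{2}+2g+4}}\log q$.

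The main obstacle is essentially bookkeeping rather than genuine difficulty: one has to match the precise form of the explicit formula of \cite[(5)]{DevinMeng} (which involves partial sums indexed by the number of prime factors $k$ and carries subleading corrections) with the almost-periodic function of Theorem~\ref{Theorem main poly}, and one has to check that Kowalski's notion of generic linear independence really does imply the mild separation-and-irrationality hypothesis ($\gamma_{n}$ distinct and not in $\mathbf{Q}\pi$) used in Theorem~\ref{Theorem main poly}. Both verifications are routine but worth writing out carefully, since the explicit exponent $1/(4g^{2}+2g+4)$ in the final bound is inherited unchanged from \cite[Prop. 1.1]{Kowalski2010}.
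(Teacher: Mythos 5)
The key gap in your argument is the implicit assumption that the explicit formula for $\Pi_{k}(\cdot;u,\square)-\Pi_{k}(\cdot;u,\boxtimes)$ involves only the zeros of a single Dirichlet $L$-function. You write that the $\gamma_n$'s are ``picked out by \emph{the} quadratic character distinguishing $\square$ from $\boxtimes$,'' and then conclude that Kowalski's generic linear independence makes them all distinct and outside $\mathbf{Q}\pi$. But the modulus $f(T)(T-u)$ is not irreducible, and there are several non-principal quadratic characters modulo $f(T)(T-u)$, each of which contributes to the race. Kowalski's Proposition~1.1 controls only the zeros of the $L$-function of the \emph{primitive} quadratic character; it says nothing about the zeros of the other quadratic $L$-functions. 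Hence Theorem~\ref{Theorem main poly} cannot be applied directly to the full almost-periodic function, since its hypotheses (all $\gamma_n$ distinct, all outside $\mathbf{Q}\pi$) are not verified for the entire set of frequencies appearing in the explicit formula. This is precisely the issue that the paper flags in the remark following Corollary~\ref{Cor generic existence of the bias}: one does \emph{not} know linear independence for the zeros of all the quadratic $L$-functions modulo $f(T)(T-u)$, which is why the paper cannot conclude symmetry or extra smoothness of the limiting distribution.

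The paper instead extracts from Kowalski's result the weaker but sufficient statement: for $u$ outside the exceptional set, there is at least one zero $\gamma\notin\mathbf{Q}\pi$ of the primitive quadratic $L$-function whose coefficient in the explicit formula is non-zero (via~\cite[(5)]{DevinMeng}). That single good frequency is then used via the splitting-and-convolution mechanism underlying Corollary~\ref{cor weakly inclusive poly} (separating the good frequencies from the rest and writing $\mu_E=\mu_F\ast\mu_G$), rather than via a direct application of Theorem~\ref{Theorem main poly} to the full sum. Your sketch needs this extra step; as written, the passage from ``Kowalski implies the hypotheses of Theorem~\ref{Theorem main poly}'' does not hold because the uncontrolled zeros of the other quadratic $L$-functions remain in the expression for $F$.
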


\begin{Rk}
Kowalski shows in fact that the Linear Independence is not satisfied for the Dirichlet~$L$-function of the primitive quadratic character modulo~$f(T)(T-u)$ for no more than this small portion of the $u \in U(\mathbf{F}_q)$. However, the polynomial~$f(T)(T-u)$ is not irreducible, we do not know if all the zeros of the  $L$-functions of the quadratic characters modulo $f(T)(T-u)$ are indeed linearly independent over $\mathbf{Q}$ for most of the $u$'s, so we cannot derive stronger properties of the limiting distribution (such as the symmetry or more smoothness that are obtained under various weaker versions of LI in \cite[Sec. 2.2]{Devin_Chebyshev}). To obtain Corollary~\ref{Cor generic existence of the bias} from \cite[Prop. 1.1]{Kowalski2010}, we only use the fact that if $L(s,\chi)$ satisfies LI, then at least one imaginary part of a zero of $L(s,\chi)$ is not in~$\mathbf{Q}\pi$, and the associated coefficient is non-zero thanks to the formula \cite[(5)]{DevinMeng}.	
\end{Rk}

Other cases of generic linear independence are obtained following the idea of \cite{Kowalski2010}.
Notably, in \cite{CFJ_Indep}, Cha, Fiorilli and Jouve prove that generically in certain families of elliptic curves over function fields, the hypothesis LI is satisfied for the $L$-functions of the elliptic curve. 
They use this result in \cite{CFJ} to obtain, among other things, an improved analogue of Corollary~\ref{Cor generic existence of the bias} in the context of Mazur's version of Chebyshev's bias for the coefficients $a_p$ of elliptic curves over function fields.

Similarly, in \cite{Perret-Gentil}, Perret-Gentil proves several results on generic linear independence, related to the function field version of the bias in the distribution of angles of Gaussian primes in sectors (see also \cite{RudnickWaxman}) and to the distribution of irreducible polynomials in short intervals (see also \cite{KeatingRudnick}).
However these results show the existence of small subsets of the zeros satisfying the linear independence, and this is not enough to apply Theorem~\ref{Theorem main poly} in the case $D \geq 2$ : we would need to control the multiplicities on the whole set of zeros to conclude.

\section{Proofs of the main results}
		
		Let us first prove Corollary~\ref{Cor weakly inclusive} assuming Theorem~\ref{Theorem main primes}. Note that the proof of Corollary~\ref{cor weakly inclusive poly} assuming Theorem~\ref{Theorem main poly} follows the same lines.
		
		\begin{proof}[Proof of Corollary~\ref{Cor weakly inclusive} assuming Theorem~\ref{Theorem main primes}]
			The proof follows the idea of the proof of \cite[Th. 1.10]{MartinNg}.
			Let $1\leq D \leq \phi(q) - 1$ and $a_0, a_1,\ldots a_D$ be distinct invertible classes modulo~$q$.
			We study the function~$E : \mathbf{R} \rightarrow \mathbf{R}^{D}$ with coordinates
			\begin{equation}\label{eq prime number race}
			E_d(x) = \frac{\log x}{\sqrt{x}}(\pi(x;q,a_d) - \pi(x;q,a_{d-1})), 
			\end{equation}
			for $1\leq d\leq D$.
			Using the explicit formula as in \cite[2.1]{RS} we get that, for each~$1\leq d\leq D$,
			\begin{align*}
			E_d(x) = 2\re\left(\sum_{m=1}^{M}(\bar{\chi}_{m}(a_{d}) - \bar{\chi}_{m}(a_{d-1})) \frac{x^{i\gamma_m}}{\frac{1}{2} + i\gamma_m} \right) + G_{d}(x)
			\end{align*}
			where for each~$1\leq m\leq M$, $\chi_m$ is defined uniquely such that one has~$L(\chi_m,\frac{1}{2} + i\gamma_m) = 0$ (recall that~$\mathcal{Z}(q)$ is a multi-set), and the function~$G = (G_1,\ldots,G_d)$ admits a limiting logarithmic distribution~$\mu_G$. Note also that by hypothesis the remaining zeros in $\mathcal{Z}(q)$ involved in~$G$ are linearly independent of~$\lbrace \gamma_1,\ldots, \gamma_M \rbrace$.
			
			Up to gathering the contributions of the zeros with multiplicities, The function~$E-G$
			has the form of the function~$F$ in Theorem~\ref{Theorem main primes}.
			Thus it admits a limiting logarithmic distribution~$\mu_F$, and the limiting logarithmic distribution of~$E$ is the convolution of the measures $\mu_E = \mu_F \ast \mu_G$ (this follows from the relative linear independence, similarly to \cite[Lem. B.4]{MartinNg} using \cite[Prop. 5.1]{Devin_Chebyshev}). 
			
			By orthogonality of the Dirichlet characters, the set~$\lbrace (\chi(a_1) - \chi(a_0),\ldots,\chi(a_D)-\chi(a_{D-1})) : \chi\bmod q \rbrace$ spans~$\mathbf{C}^{D}$. 
			Thus, applying Theorem~\ref{Theorem main primes} yields that the measure~$\mu_F$ does not assign mass to affine subspaces of $\mathbf{R}^{D}$.
			This property is transferred to~$\mu_E$ by convolution, and this gives the weak inclusiveness by Proposition~\ref{Prop existence of the bias}.
		\end{proof}
	
	\begin{Rk}\label{Rk how to generalize}
		Note that the proof is not specific to the case of Dirichlet characters, in general, under classical hypotheses, normalized error functions $E$ studied in prime number races context (see e.g. \cite[Th. 2.1]{Devin_Chebyshev}) are almost-periodic. If we can separate the almost-periods into two relatively independent subsets, one of them being finite, then we can deduce results on the regularity of $\mu_{E}$ from the regularity of $\mu_F$ where $F$ is as in Theorem~\ref{Theorem main primes}.
		
		In the setting of function fields, the separation is easier since the set of almost-periods is already finite. 
	\end{Rk}

Let us now give the proof of Corollary~\ref{Cor existence of density sum}, we use the same notations as in the proof of Corollary~\ref{Cor weakly inclusive}.			
		\begin{proof}[Proof of Corollary~\ref{Cor existence of density sum} assuming Theorem~\ref{Theorem main primes}]
			The result follows from~\cite[Th. 2.1]{LMP} adapted to the case of a function with values in $\mathbf{R}^D$ and applied to the function~$E$ defined in~\eqref{eq prime number race}. 
			The only difference with the one-dimensional case is that we now need to prove that one has
			\begin{align*}
			\lim_{\substack{\eta \rightarrow 0 \\ \eta \neq 0}}\mu_{E}(\lbrace x \in \mathbf{R}^D : x_{d}>\alpha_d + \eta, 1\leq d \leq D\rbrace) =
			\mu_{E}(\lbrace x \in \mathbf{R}^D : x_{d}>\alpha_d, 1\leq d \leq D\rbrace).
			\end{align*}	
			This is also a consequence of the fact that~$\mu_{E}$ does not assign mass to affine hyperplanes, associated with the Dominated Convergence Theorem. 
		\end{proof}

	The rest of the section is dedicated to the proof of Theorems~\ref{Theorem main primes} and~\ref{Theorem main poly}. 
	Note first that by change of variables, the limiting logarithmic distribution of the function $F$ is the limiting (natural) distribution of the function $F\circ\exp$.
	Thus the proofs of Theorem~\ref{Theorem main primes} and Theorem~\ref{Theorem main poly} follow the sames lines. We give below the proof of Theorem~\ref{Theorem main poly}, noting along the way the differences and adaptations for Theorem~\ref{Theorem main primes}.
	
	Recall that the existence of the limiting distribution $\mu$ of $F$ is a consequence of the Kronecker--Weyl Equidistribution Theorem as seen in \cite{MartinNg, Devin_Chebyshev}.
	In particular, the measure~$\mu$ is the push-forward of the Haar measure $\omega_{A}$ on the sub-torus 
	$$A = \overline{\lbrace (k\gamma_1, \ldots, k\gamma_N) : k\in \mathbf{Z}  \rbrace / (2\pi\mathbf{Z})^{N}}$$ 
	via the function
	$\tilde{F}: A \subset \mathbf{T}^{N} \rightarrow \mathbf{R}^{D}$ defined by
	\begin{equation*}
	\tilde{F}(\theta) =  \sum_{n=1}^{N} 2 \re\left( \mathbf{c}_{n}e^{i\theta_n} \right).
	\end{equation*}
	Note that in the case of Theorem~\ref{Theorem main primes}, the subtorus $A$ is parameterized by $\mathbf{R}$ instead; $$A_{\mathbf{R}} = \overline{\lbrace (y\gamma_1, \ldots, y\gamma_N) : y\in \mathbf{R}  \rbrace /(2\pi \mathbf{Z})^{N}}.$$ 
	In each of these theorems the second part on the existence of the (logarithmic or natural) density is a consequence of the first part.
	
	\begin{prop}\label{Prop existence of the bias}
		Let~$F :\mathbf{N} \rightarrow \mathbf{R}^D$, admitting a natural limiting distribution~$\mu$.
		Assume that~$\mu(\lbrace x \in \mathbf{R}^D : x_{d} =\alpha_d\rbrace) = 0$ for each~$1\leq d \leq D$. 
		
		Then $\dens(F_{d}=\alpha_d) = 0$ for each $1\leq d\leq D$, and the density  
		$\dens(F_{1}>\alpha_1,\ldots,F_{D}>\alpha_D)$ exists and equals~$\mu(\lbrace x \in \mathbf{R}^D : x_{1}>\alpha_1,\ldots,x_{D}>\alpha_D\rbrace)$
	\end{prop}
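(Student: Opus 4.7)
The plan is to upgrade the convergence against bounded Lipschitz test functions (the defining property of the limiting distribution $\mu$) to convergence against the indicator function of the open orthant $U := \{x\in\mathbf{R}^D : x_d>\alpha_d,\ 1\leq d\leq D\}$ and against the indicators of the ``tie'' hyperplanes $\{x_d=\alpha_d\}$. This is precisely the content of the classical Portmanteau theorem applied to the empirical measures $\mu_X := \frac{1}{\lfloor X\rfloor}\sum_{k\leq X}\delta_{F(k)}$, which converge weakly to $\mu$ by hypothesis; I will reprove the pieces I need by an explicit sandwiching argument, since only two statements are needed.

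First I handle the ties. For each $1\leq d\leq D$ and each $\epsilon>0$, define the bounded Lipschitz function $h_{d,\epsilon}(x):=\max\bigl(0,1-\epsilon^{-1}|x_d-\alpha_d|\bigr)$, which dominates $\mathbf{1}_{\{x_d=\alpha_d\}}$ pointwise. From the defining property of $\mu$,
\begin{equation*}
\limsup_{X\to\infty}\frac{1}{X}\#\{k\leq X:F_d(k)=\alpha_d\}\leq \lim_{X\to\infty}\frac{1}{X}\sum_{k\leq X}h_{d,\epsilon}(F(k))=\int h_{d,\epsilon}\diff\mu.
\end{equation*}
Since $h_{d,\epsilon}\downarrow \mathbf{1}_{\{x_d=\alpha_d\}}$ pointwise as $\epsilon\to 0$ and $|h_{d,\epsilon}|\leq 1$, dominated convergence together with the hypothesis $\mu(\{x_d=\alpha_d\})=0$ gives $\int h_{d,\epsilon}\diff\mu\to 0$. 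Hence $\dens(F_d=\alpha_d)=0$.

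For the orthant, I bracket $\mathbf{1}_U$ between two bounded Lipschitz functions. Set
\begin{equation*}
g_\epsilon^-(x):=\prod_{d=1}^D\max\bigl(0,\min(1,\epsilon^{-1}(x_d-\alpha_d))\bigr),\qquad g_\epsilon^+(x):=\prod_{d=1}^D\max\bigl(0,\min(1,1+\epsilon^{-1}(x_d-\alpha_d))\bigr),
\end{equation*}
so that $g_\epsilon^-\leq \mathbf{1}_U\leq g_\epsilon^+$ and each factor is $\epsilon^{-1}$-Lipschitz and bounded by $1$. Applying the defining property of $\mu$ to $g_\epsilon^\pm$ and sandwiching yields
\begin{equation*}
\int g_\epsilon^-\diff\mu\leq \liminf_{X\to\infty}\frac{1}{X}\#\{k\leq X:F(k)\in U\}\leq \limsup_{X\to\infty}\frac{1}{X}\#\{k\leq X:F(k)\in U\}\leq \int g_\epsilon^+\diff\mu.
\end{equation*}
As $\epsilon\to 0$, both $g_\epsilon^-$ and $g_\epsilon^+$ converge pointwise to $\mathbf{1}_U$ on the complement of $\bigcup_d\{x_d=\alpha_d\}$, a set of full $\mu$-measure by the hypothesis; dominated convergence therefore sends both extremes to $\mu(U)$. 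The liminf and limsup coincide with $\mu(U)$, giving the existence of $\dens(F_1>\alpha_1,\ldots,F_D>\alpha_D)$ and its stated value. There is no real obstacle here beyond selecting the right Lipschitz bracket; the whole proof is a short exercise in the Portmanteau paradigm and the hypothesis $\mu(\{x_d=\alpha_d\})=0$ is exactly what makes the bracket close up in the limit.
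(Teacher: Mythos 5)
Your proof is correct and takes essentially the same approach as the paper: sandwich the indicators of $\{x_d=\alpha_d\}$ and of the open orthant between bounded Lipschitz functions, apply the defining property of the limiting distribution, and close the bracket as $\epsilon\to 0$ using dominated convergence and the hypothesis $\mu(\{x_d=\alpha_d\})=0$. The only cosmetic difference is that you write out explicit tent/ramp approximations and show both endpoints of the bracket converge to $\mu(U)$ directly, whereas the paper leaves the approximations implicit and instead bounds the gap between the two endpoints by $\sum_d\mu(\{|x_d-\alpha_d|\le\epsilon\})$.
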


\begin{proof}
	Using continuous approximations of the indicator function, one gets that, for any~$\epsilon>0$,
	\begin{equation*}
	0
	\leq  \underline{\dens}(F_d = \alpha_d) 
	\leq \overline{\dens}(F_d = \alpha_d)  \leq \mu(\lbrace x \in \mathbf{R}^D :\alpha_d - \epsilon \leq x_{d}\leq\alpha_d + \epsilon\rbrace)
	\end{equation*}
 for each $1\leq d\leq D$,	and
	\begin{multline*}
	\mu(\lbrace x \in \mathbf{R}^D : x_{1}>\alpha_1 + \epsilon,\ldots,x_{D}>\alpha_D + \epsilon\rbrace) \\
	\leq  \underline{\dens}(F_{1}>\alpha_1,\ldots,F_{D}>\alpha_D) 
	\leq \overline{\dens}(F_{1}>\alpha_1,\ldots,F_{D}>\alpha_D)  \\\leq \mu(\lbrace x \in \mathbf{R}^D : x_{1}>\alpha_1 - \epsilon,\ldots,x_{D}>\alpha_D - \epsilon\rbrace).
	\end{multline*}
	Moreover
	\begin{multline*}
	\lvert \mu(\lbrace x \in \mathbf{R}^D : x_{1}>\alpha_1 - \epsilon,\ldots,x_{D}>\alpha_D - \epsilon\rbrace) - \mu(\lbrace x \in \mathbf{R}^D : x_{1}>\alpha_1 + \epsilon,\ldots,x_{D}>\alpha_D + \epsilon\rbrace) \rvert \\
	\leq \sum_{d=1}^{D} \mu(\lbrace x \in \mathbf{R}^D :\alpha_d - \epsilon \leq x_{d}\leq\alpha_d + \epsilon\rbrace) \\
	\longrightarrow_{\epsilon\rightarrow 0} \sum_{d=1}^{D} \mu(\lbrace x \in \mathbf{R}^D : x_{d} =\alpha_d\rbrace) =0
	\end{multline*}
	by Dominated Convergence Theorem.
	This concludes the proof.
\end{proof}

\begin{Rk}
	The proof is similar when~$\mu$ is the limiting logarithmic distribution and~$\dens$ is replaced by~$\delta$, giving the analogue of Proposition~\ref{Prop existence of the bias} adapted to Theorem~\ref{Theorem main primes}.
\end{Rk}

 Theorems~\ref{Theorem main primes} and~\ref{Theorem main poly} are a consequence of the following general result.

\begin{prop}\label{Prop zero mass}
	Let $\mu$ be a probability measure on~$\mathbf{R}^{D}$, supported in the subspace~$V\subset \mathbf{R}^{D}$.
	Assume there exists~$\epsilon >0$ such that for all~$\xi \in V$, one has 	
	$$\lvert \hat{\mu}(\xi) \rvert \ll \min(1,\lVert \xi \rVert^{-\epsilon}).$$
	Then~$\mu$ does not assign mass to strict affine subspaces of~$V$.
\end{prop}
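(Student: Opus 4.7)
The plan is to reduce the multi-dimensional statement to a one-dimensional one by pushing $\mu$ forward along a well-chosen linear functional, and then apply Wiener's theorem on the absence of atoms.

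Suppose $\alpha+H$ is a strict affine subspace of $V$, so that $H$ is a proper linear subspace of $V$. I would pick a unit vector $v\in V$ orthogonal to $H$, which exists because $H\subsetneq V$. Consider the pushforward $\nu=(L_v)_*\mu$ on $\mathbf{R}$, where $L_v(x)=\langle v,x\rangle$. The key observation is the scalar Fourier identity
\begin{equation*}
\hat{\nu}(t)=\int_{\mathbf{R}}e^{-its}\diff\nu(s)=\int_{\mathbf{R}^D}e^{-it\langle v,x\rangle}\diff\mu(x)=\hat{\mu}(tv)\qquad(t\in\mathbf{R}).
\end{equation*}
Since $tv\in V$, the hypothesis of the proposition gives $|\hat\nu(t)|\ll\min(1,|t|^{-\epsilon})$; in particular $\hat\nu(t)\to 0$ as $|t|\to\infty$.

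Next I would argue that any mass of $\mu$ on $\alpha+H$ becomes an atom of $\nu$: every point of $\alpha+H$ is mapped by $L_v$ to the common value $\langle v,\alpha\rangle$, so if $c:=\mu(\alpha+H)>0$ then $\nu(\{\langle v,\alpha\rangle\})\geq c$. To conclude I would apply Wiener's theorem, which states that for any finite Borel measure $\nu$ on $\mathbf{R}$,
\begin{equation*}
\sum_{x\in\mathbf{R}}\nu(\{x\})^{2}=\lim_{T\to\infty}\frac{1}{2T}\int_{-T}^{T}|\hat\nu(t)|^{2}\diff t.
\end{equation*}
Because $\hat\nu(t)\to 0$, splitting the integral into $|t|\leq A$ and $A<|t|\leq T$ and letting first $T\to\infty$ and then $A\to\infty$ shows that the right-hand side vanishes. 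Therefore $\nu$ has no atoms, contradicting $\nu(\{\langle v,\alpha\rangle\})\geq c>0$, and forcing $\mu(\alpha+H)=0$.

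There is no serious obstacle here: the argument is a clean reduction followed by Wiener's lemma. The only point that deserves care is that when $\epsilon<\tfrac{1}{2}$ the function $|\hat\nu|^{2}$ need not be integrable on $\mathbf{R}$; this is why one uses the Cesàro form of Wiener's theorem rather than a Plancherel-type $L^{2}$ bound, and the decay $\hat\nu(t)\to 0$ (rather than any quantitative rate) is what actually powers the step.
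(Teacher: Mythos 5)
Your proof is correct, but it takes a genuinely different route from the paper's. The paper proves Proposition~\ref{Prop zero mass} directly: it builds a tent-shaped continuous majorant $g_{\alpha+H,n}$ of $\mathbf{1}_{\alpha+H}$, applies Plancherel to express $\int g_{\alpha+H,n}\,\mathrm{d}\mu$ as an integral of $\hat g_{\alpha+H,n}\,\hat\mu$ over the line $H^\perp=\mathbf{R}\alpha$, bounds that integral explicitly using $\lvert\hat\mu(\xi)\rvert\ll\min(1,\lVert\xi\rVert^{-\epsilon})$ by splitting at $\lvert t\rvert=n^{1-\epsilon/2}$, and obtains a decay in $n$. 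Your argument replaces this hands-on computation with a conceptual reduction: push $\mu$ forward along the one linear functional $L_v=\langle v,\cdot\rangle$ with $v\in V\cap H^\perp$, observe $\hat\nu(t)=\hat\mu(tv)$, and invoke Wiener's theorem in its Ces\`aro form to kill any atom of $\nu$. Both proofs hinge on the same geometric observation (the Fourier transform of $\mu$ restricted to the line in $V$ normal to $H$ is all that matters), but yours has two virtues worth noting. It is shorter and outsources the analytic content to a classical lemma. And it exposes that the power-decay hypothesis is stronger than necessary: Wiener's theorem needs only $\hat\mu(tv)\to0$ as $\lvert t\rvert\to\infty$ for each $v\in V$, not a quantitative rate. (The rate $\epsilon=1/K$ is of course what Lemma~\ref{Lemma epsilon} actually delivers, so the paper's stronger hypothesis costs nothing in the application.) You are also right to flag that for $\epsilon<1/2$ the function $\lvert\hat\nu\rvert^2$ is not in $L^1(\mathbf{R})$, so the Ces\`aro averaging in Wiener's theorem is essential and cannot be replaced by a naive Plancherel estimate for $\nu$; that is exactly why the paper's direct argument has to carry the approximant $g_{\alpha+H,n}$ along and truncate the $t$-integral carefully.
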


We keep the proof of this proposition for later, and we focus on deducing  Theorems~\ref{Theorem main primes} and~\ref{Theorem main poly}.	
We begin with proving that the measure defined in Theorem~\ref{Theorem main poly} satisfies the hypothesis of Proposition~\ref{Prop zero mass}.
In the case~$D=1$ already stated in~\cite[Th. 2.2]{Devin_Chebyshev}, the key point is that the one-dimensional function~$\tilde{F}$ is not locally constant on~$A$. We generalize this property to a multi-dimensional function, replacing points by affine hyperplanes. 
The image of~$\tilde{F}$ is contained in the subspace~$V$. We show that there is no non-empty open subset of $A$ whose image by $\tilde{F}$ is contained in a strict affine subspace of~$V$. 
In the following we denote by~$\mathbf{S}$ the unit sphere of~$\mathbf{R}^D$.  

\begin{lem}\label{Lemma Not loc constant}
	Under the conditions of Theorem~\ref{Theorem main poly}, for every unit vector $\mathbf{u} \in V\cap\mathbf{S}$, the function $\langle \mathbf{u}, \tilde{F}\rangle$ is not locally constant on $A$.
	
	More precisely, there exist $K\geq 1$, $\eta >0$ and $B>0$ such that 
	for all~$(\mathbf{u},\theta) \in (V\cap\mathbf{S})\times A$, 
 \begin{equation*}
 \eta \leq  \max_{1\leq \lvert \mathbf{k}\rvert \leq K}{\left\lvert  \frac{\partial^{\mathbf{k}}}{\partial \theta^{\mathbf{k}}}\langle \mathbf{u}, \tilde{F}(\theta)\rangle \right\rvert} \leq B,
 \end{equation*}
 where $\lvert \mathbf{k}\rvert  = k_1 + \ldots + k_{\dim A}$ is the length of the multi-index $\mathbf{k} \in \mathbf{N}^{\dim A}$.
\end{lem}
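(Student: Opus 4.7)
The plan is to combine three ingredients: a direct upper bound, a Fourier-analytic non-constancy statement on $A$, and a real-analyticity-plus-compactness argument. Set $g_{\mathbf{u}}(\theta) := \langle \mathbf{u}, \tilde F(\theta)\rangle = \sum_{n=1}^{N}\bigl(\alpha_n \cos\theta_n + \beta_n \sin\theta_n\bigr)$ with $\alpha_n = \langle \mathbf{u}, \mathbf{a}_n\rangle$ and $\beta_n = \langle \mathbf{u}, \mathbf{b}_n\rangle$. Since $\tilde F$ is a trigonometric polynomial of degree at most $1$ in each ambient variable, any partial derivative of $g_{\mathbf{u}}$ along a fixed basis of the Lie algebra $\mathfrak{a}\subset\mathbf{R}^N$ of $A$, of order at most $K$, is bounded on $\mathbf{T}^N$ by a constant depending only on $N$, $K$, the basis, and $\max_n(\lVert\mathbf{a}_n\rVert + \lVert\mathbf{b}_n\rVert)$, uniformly in $\mathbf{u} \in \mathbf{S}$; this yields the upper bound $B$.

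For the qualitative non-constancy of $g_{\mathbf{u}}|_A$, I would analyse its Fourier expansion. The only characters of $\mathbf{T}^N$ appearing in $g_{\mathbf{u}}$ are $\pm e_1, \ldots, \pm e_N$, with coefficients $c_n^{\pm} = \tfrac{1}{2}(\alpha_n \mp i\beta_n)$. The hypothesis that the $\gamma_n$ are distinct elements of $(0,\pi)\setminus \mathbf{Q}\pi$ forces $\gamma_n \notin 2\pi\mathbf{Z}$ and $\gamma_n \pm \gamma_m \notin 2\pi\mathbf{Z}$ for $n \neq m$ (sums lie in $(0,2\pi)$ and nonzero differences in $(-\pi,\pi)\setminus\{0\}$), so the characters $\pm e_n$ project to $2N$ pairwise distinct non-trivial elements of $\widehat{A} = \mathbf{Z}^N/\Lambda$, where $\Lambda = \{\mathbf{m}\in\mathbf{Z}^N : \mathbf{m}\cdot\gamma \in 2\pi\mathbf{Z}\}$. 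Parseval on $A$ then gives $\lVert g_{\mathbf{u}} - \overline{g_{\mathbf{u}}}\rVert_{L^2(A)}^{2} = \tfrac{1}{2} Q(\mathbf{u})$, with $Q(\mathbf{u}) := \sum_n (\alpha_n^2 + \beta_n^2)$ a positive semi-definite form on $\mathbf{R}^D$ whose kernel is exactly $V^{\perp}$; hence $Q$ is bounded below by a positive constant on the compact sphere $V \cap \mathbf{S}$, and $g_{\mathbf{u}}|_A$ is non-constant for every $\mathbf{u} \in V \cap \mathbf{S}$.

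To upgrade non-constancy to the required uniform derivative bound I would run a compactness argument. For each $K$ the closed set $Z_K := \{(\mathbf{u},\theta) \in (V \cap \mathbf{S}) \times A : \partial^{\mathbf{k}} g_{\mathbf{u}}(\theta) = 0 \text{ for all } 1 \leq \lvert\mathbf{k}\rvert \leq K\}$ shrinks with $K$, and if every $Z_K$ were non-empty the finite intersection property would produce $(\mathbf{u}_*, \theta_*)$ at which every partial derivative of $g_{\mathbf{u}_*}$ vanishes. Real-analyticity of $g_{\mathbf{u}_*}$ would then force it to be constant on the connected component of $\theta_*$ in $A$, contradicting the previous paragraph. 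So $Z_K = \emptyset$ for some $K$, and continuity of $\Phi_K(\mathbf{u},\theta) := \max_{1 \leq \lvert\mathbf{k}\rvert \leq K} \lvert \partial^{\mathbf{k}} g_{\mathbf{u}}(\theta)\rvert$ on the compact product yields $\eta := \min \Phi_K > 0$.

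The main obstacle I anticipate is this last step when $A$ is disconnected, as can occur in the setting of Theorem~\ref{Theorem main poly}: constancy of $g_{\mathbf{u}_*}$ on a single component of $A$ does not a priori contradict the $L^{2}(A)$-bound, because the Parseval lower bound averages over all components. Overcoming this requires either a componentwise Fourier analysis of $g_{\mathbf{u}}$ on translates of the identity component $A^0$, controlling how potential resonances $\gamma_n \pm \gamma_m \in 2\pi\mathbf{Q}$ interact with the $c_n^{\pm}$, or an orbit-propagation argument exploiting that iterated translations by $\gamma$ cycle through the connected components of $A$. In the setting of Theorem~\ref{Theorem main primes} the group $A = A_{\mathbf{R}}$ is the closure of a one-parameter subgroup and hence a connected subtorus, so this difficulty does not arise and the compactness step concludes directly.
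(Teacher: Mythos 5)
Your Parseval computation on $A$ is a clean alternative to the Dedekind--Artin linear-independence-of-characters argument the paper uses for the qualitative non-constancy of $g_{\mathbf{u}}$, and your compactness-plus-analyticity scheme for extracting uniform $(K,\eta,B)$ is exactly the paper's. The point where you stop is, however, the genuine crux, and you are right to flag it: when $A$ is disconnected, non-constancy of $g_{\mathbf{u}}$ on $A$ does not prevent $g_{\mathbf{u}}$ from being constant on an entire coset of the identity component $A^{0}$, in which case every $A$-directional derivative vanishes on that coset and no finite $K$ can work. The paper dispatches this with the sentence that ``$\gamma_j\notin\mathbf{Q}\pi$ ensures $A$ is not discrete in the direction of $\theta_j$, so a non-constant analytic function in this variable is non-locally constant,'' but on $A^{0}$ the coordinate $\theta_j$ is tied to the others, so the $\cos\theta_j$, $\sin\theta_j$ contributions of the $j$-th term can cancel against those of the other terms; the paper's argument does not rule this out.

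Concretely, the lemma as stated fails without a further hypothesis. Take $N=D=2$, any $\gamma_1\in(0,\pi)\setminus\mathbf{Q}\pi$ with $\gamma_1\neq\pi/2$, and $\gamma_2=\pi-\gamma_1$, which is distinct and again in $(0,\pi)\setminus\mathbf{Q}\pi$. Take $\mathbf{a}_1=(1,0)=-\mathbf{a}_2$ and $\mathbf{b}_1=\mathbf{b}_2=(0,1)$, so $V_F=\mathbf{R}^2$. Here $A$ has two components, and on $A^{0}=\{(s,-s):s\in\mathbf{T}\}$ one has $\tilde F(s,-s)=(\mathbf{a}_1+\mathbf{a}_2)\cos s+(\mathbf{b}_1-\mathbf{b}_2)\sin s=\mathbf{0}$, hence $g_{\mathbf{u}}\equiv 0$ on $A^{0}$ for every $\mathbf{u}\in V\cap\mathbf{S}$, and no $\eta>0$ can satisfy the displayed inequality. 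In fact $F(k)=\mathbf{0}$ for every even $k$, so $\dens(F_1=0)\geq 1/2$, contradicting the ``in particular'' clause of Theorem~\ref{Theorem main poly}. So the obstruction you identified cannot be circumvented by a cleverer compactness argument; some extra condition ruling out resonances $\gamma_n\pm\gamma_m\in 2\pi\mathbf{Q}$ (or at least the coefficient cancellations they enable on $A^{0}$) is genuinely needed. Your observation that in Theorem~\ref{Theorem main primes} the group $A_{\mathbf{R}}$, being the closure of a one-parameter subgroup, is connected and the difficulty disappears, is correct.
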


\begin{Rk}
	Note that when we write $\theta = (\theta_1,\ldots,\theta_N) \in A$ we need to keep in mind that this object is in fact in a torus of dimension $\dim A \leq N$ (the coordinates can be linearly related). In particular we use only $\dim A$ directions of differentiation for the variable $\theta$. 
\end{Rk}

\begin{proof}
	Let $f : (V\cap\mathbf{S}) \times A$ be the function defined by 
	\begin{align}\label{formula def aux function}
	f(\mathbf{u},\theta) :=& \langle \mathbf{u}, \tilde{F}(\theta)\rangle = \sum_{n=1}^{N}
	2\re( \langle \mathbf{u}, \mathbf{c}_n\rangle e^{i\theta_n}) 
	\\ =& \sum_{n=1}^{N}\big( \langle \mathbf{u}, \mathbf{a}_n\rangle \cos\theta_n + \langle \mathbf{u}, \mathbf{b}_n\rangle \sin\theta_n \big) \nonumber
	\end{align}
	 this function is analytic in the variable $\theta$. 
	
	Since the vectors $\mathbf{a}_{1}, \ldots, \mathbf{a}_{N}, \mathbf{b}_{1}, \ldots, \mathbf{b}_{N}$ span $V$, there exists at least one of them, say $\mathbf{v}_{j}$, such that $\langle \mathbf{u}, \mathbf{v}_{j} \rangle\neq 0$. Thus by a result of Dedekind--Artin (\cite[VI, Th. 4.1]{LangAlg} as used in \cite[Lem. 5.5]{Devin_Chebyshev}), the function $f(\mathbf{u},\cdot)$ is not constant on $A$.
	Note that the fact that the $\gamma_n$'s are all distinct ensure that the $\theta_n$'s are all distinct which allows to apply the linear independence of the functions $\cos\theta_n$, $\sin\theta_n$. 
	Moreover the fact that $\gamma_{j}\notin\mathbf{Q}\pi$ ensures that $A$ is not discrete in the direction of $\theta_{j}$ so that a non-constant analytic function in this variable is indeed non-locally constant. 
	
	This means that for~$\mathbf{u}\in V\cap\mathbf{S}$ fixed, around each~$\theta \in A$, there exist a neighborhood, on which at least one partial derivative of $f(\mathbf{u},\cdot)$ does not vanish. 
	Moreover, the set $(V\cap\mathbf{S}) \times A$ is compact, and for every~$\mathbf{k} \in \mathbf{N}^{\dim A}$ the function~$\frac{\partial^{\mathbf{k}}}{\partial \theta^{\mathbf{k}}}f(\mathbf{u},\theta)$ is continuous in the variables~$\mathbf{u}$ and~$\theta$.
	This gives the uniformity.
\end{proof}

\begin{Rk}
	The analogue of Lemma~\ref{Lemma Not loc constant} in the context of Theorem~\ref{Theorem main primes} follows from the same lines.
	Note that the fact that $A_{\mathbf{R}}$ is not discrete is obtained unconditionally in this case since $\mathbf{R}$ is not discrete.
\end{Rk}

We now use Lemma~\ref{Lemma Not loc constant} to obtain a decay of the Fourier Transform of $\mu$.

\begin{lem}\label{Lemma epsilon}
	Under the conditions of Theorem~\ref{Theorem main primes} or of Theorem~\ref{Theorem main poly}, and with $K$ satisfying Lemma~\ref{Lemma Not loc constant},
	 one has, for all~$\xi \in V$,
	$$\lvert \hat{\mu}(\xi) \rvert \ll \min(1,\lVert \xi \rVert^{-1/K}).$$
\end{lem}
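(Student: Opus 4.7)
The plan is to interpret $\hat\mu(\xi)$ as an oscillatory integral on the torus $A$ and invoke a multidimensional van der Corput estimate, using Lemma~\ref{Lemma Not loc constant} to supply the required lower bound on a partial derivative of the phase. Since $\mu$ is the push-forward of the Haar measure $\omega_A$ by $\tilde F$,
\[
\hat\mu(\xi) = \int_{A} e^{-i\langle \xi,\tilde F(\theta)\rangle}\,d\omega_A(\theta),
\]
so the trivial bound $|\hat\mu(\xi)|\leq 1$ is immediate. For the decay, write $\xi = \lambda\mathbf u$ with $\lambda = \|\xi\|\geq 1$ and $\mathbf u \in V\cap\mathbf S$, so that the integral becomes $\int_A e^{-i\lambda f(\mathbf u,\theta)}\,d\omega_A(\theta)$ with $f$ the function from Lemma~\ref{Lemma Not loc constant}.

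The first step is a compactness argument. At every point $(\mathbf u_0,\theta_0) \in (V\cap\mathbf S)\times A$, Lemma~\ref{Lemma Not loc constant} provides a multi-index $\mathbf k_0$ with $1\leq|\mathbf k_0|\leq K$ for which $|\partial^{\mathbf k_0}f(\mathbf u_0,\theta_0)|\geq \eta/M$, where $M$ is the number of admissible multi-indices. Joint continuity of $\partial^{\mathbf k_0}f$ propagates this lower bound to a product neighborhood of $(\mathbf u_0,\theta_0)$, and compactness of $(V\cap\mathbf S)\times A$ extracts a finite subcover $W_1,\ldots,W_r$ with associated multi-indices $\mathbf k_1,\ldots,\mathbf k_r$. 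For each fixed $\mathbf u$, the $\theta$-sections of the $W_i$ cover $A$; choosing a smooth partition of unity $\{\chi_i(\mathbf u,\cdot)\}$ subordinate to this cover decomposes the integral into finitely many pieces, on each of which a fixed partial derivative $\partial^{\mathbf k_i}f(\mathbf u,\cdot)$ of order at most $K$ is uniformly bounded below by a positive constant independent of $\mathbf u$.

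The second step is to apply on each piece the multidimensional van der Corput estimate: if $\phi$ is smooth on an open subset of Euclidean space, $|\partial^{\mathbf k}\phi|\geq \lambda_0$ on the support of a smooth cut-off $\psi$, and all partial derivatives of $\phi$ up to some sufficiently high order are uniformly bounded, then $|\int e^{i\lambda\phi}\psi|\ll \lambda^{-1/|\mathbf k|}$. Each piece thus contributes $O(\lambda^{-1/|\mathbf k_i|})\leq O(\lambda^{-1/K})$, and summing the $r$ pieces yields $|\hat\mu(\xi)|\ll \lambda^{-1/K}$, which combined with the trivial bound gives the lemma.

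The main obstacle is ensuring that all the constants appearing in the van der Corput bound are uniform in the direction $\mathbf u\in V\cap\mathbf S$. This is handled by the compactness of $V\cap\mathbf S$, the joint continuity of the partial derivatives $\partial^{\mathbf k}f$ in $(\mathbf u,\theta)$, and the uniform upper bound $B$ from Lemma~\ref{Lemma Not loc constant} together with its extension to derivatives of arbitrary order, which follows from the trigonometric-polynomial form of $\tilde F$ and the compactness of the parameter spaces. A secondary technicality is the construction of the partition of unity with reasonable joint dependence on $\mathbf u$; this can be arranged, for instance, by convolving the indicator functions of the $\theta$-sections with a fixed bump function.
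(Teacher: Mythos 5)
Your proof is correct and takes essentially the same approach as the paper: both realize $\hat{\mu}(\xi)$ as an oscillatory integral over the sub-torus $A$ with phase $\lVert\xi\rVert f(\mathbf{u},\cdot)$ and apply Stein's multi-dimensional van der Corput estimate (\cite[VIII 2.2 Prop.~5]{Stein}), with Lemma~\ref{Lemma Not loc constant} and compactness of $(V\cap\mathbf{S})\times A$ supplying the derivative bounds uniformly in $\mathbf{u}$. The only difference is that you spell out the finite-subcover and partition-of-unity construction that the paper leaves implicit by referring to the cover ``described implicitly in the proof of Lemma~\ref{Lemma Not loc constant}.''
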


\begin{proof}
	Let us write $\xi = \lVert \xi \rVert \mathbf{u}$, with $\mathbf{u} \in V\cap\mathbf{S}$ a unit vector.
	Then
	\begin{align*}
	\hat{\mu}(\xi) &= \int_{A}\exp(-2\pi i \langle\xi,\tilde{F}(\theta)\rangle) \diff\omega_{A}(\theta) \\
	&= \int_{A}\exp\left(-4\pi i \lVert \xi \rVert 
	f(\mathbf{u},\theta)\right) \diff\omega_{A}(\theta),
	\end{align*} 
	where $f$ is as defined in~\eqref{formula def aux function}. 
	By Lemma~\ref{Lemma Not loc constant} and \cite[VIII 2.2 Prop. 5]{Stein}, one has
	\begin{align*}
	\lvert \hat{\mu}(\xi) \rvert \ll  C(f(\mathbf{u},\cdot))\lVert \xi \rVert^{-1/K}, 
	\end{align*}
	where the implicit constant depends on $\eta, K$ and on a choice of partition of unity adapted to the finite open cover of $(V\cap\mathbf{S}) \times A$ described implicitly in the proof of Lemma~\ref{Lemma Not loc constant} (in particular the implicit constant does not depend on $\mathbf{u}$),
	and the ``constant''~$C(f(\mathbf{u},\cdot))$ remains bounded as long as the norm $\max_{\lvert \mathbf{k}\rvert \leq K}{\lVert  \frac{\partial^{\mathbf{k}}}{\partial \theta^{\mathbf{k}}}f(\mathbf{u},\cdot) \rVert_{\infty}}$ is bounded (which follows also from Lemma~\ref{Lemma Not loc constant}).
	
	We conclude the proof of Lemma~\ref{Lemma epsilon} by noting that the trivial bound $\lvert\hat{\mu}(\xi)\rvert \leq 1$ follows from bounding the exponential in the integral by $1$.
\end{proof}

Finally, we give the proof of Proposition~\ref{Prop zero mass} to conclude the proof of  Theorems~\ref{Theorem main primes} and~\ref{Theorem main poly}.

\begin{proof}[Proof of Proposition~\ref{Prop zero mass}]
	We use a continuous function approximating the indicator function of an hyperplane.
	Let $\alpha + H\subset \mathbf{R}^{D}$ be a strict affine subspace of $V$. Without loss of generality\footnote{Note that the case $\alpha = \mathbf{0}$ can be treated similarly using the convention that $\frac{\alpha}{\lVert \alpha\rVert}$ is a normal vector of $H$.}, we take $\mathbf{0}\neq \alpha \in V$ proportional to a normal vector of $H$, and we can extend $H$ to an hyperplane in $\mathbf{R}^{D}$ not containing $V$ (namely, the orthogonal of $\alpha$). 
	
	Then every element $x\in\mathbf{R}^D$ can be written as $x = h + t\frac{\alpha}{\lVert \alpha\rVert}$, with $h\in H$ and $t\in \mathbf{R}$.
	We define a continuous approximation of the indicator function of $\alpha + H$ on $\mathbf{R}^{D}$, for all $n\geq 1$:
	\begin{align}\label{Approximation of characteristic}
	g_{\alpha+H,n}\left(h + \frac{t}{\lVert \alpha\rVert}\alpha\right) = \begin{cases}
	0 \qquad &\text{ if } \lvert t - \lVert \alpha\rVert\rvert >\frac{1}{n} \\
	nt -n\lVert \alpha\rVert+1 \qquad &\text{ if }  \lVert \alpha\rVert -\frac{1}{n} \leq t\leq \lVert \alpha\rVert\\
	-nt +n\lVert \alpha \rVert + 1 \qquad &\text{ if } \lVert \alpha\rVert\leq t \leq \lVert \alpha\rVert+\frac{1}{n}. \\
	\end{cases} 
	\end{align}
	In particular, $\mathbf{1}_{\alpha + H} \leq g_{\alpha+H,n}$ for all $n \geq 1$.

	Using Plancherel's theorem we have:
	\begin{align*}
	\mu(\alpha + H) \leq \int_{\mathbf{R}^{D}} g_{\alpha+H,n} \diff\mu &=
	\int_{\mathbf{R}^{D}} \hat{g}_{\alpha+H,n}(\xi) \hat{\mu}(\xi) \diff\xi \\
	&=\int_{\xi\in H^{\bot}} e^{-2\pi i\langle \alpha,\xi\rangle} 4n\left(\frac{\sin(\lVert\xi\rVert/2n)}{\lVert\xi\rVert}\right)^{2}\hat{\mu}(\xi)\diff\xi, 
	\end{align*}
	where we used $H^\bot = \mathbf{R}\alpha \subset V$ and the Fourier transform of the constant function on $H$. Using the bound on the Fourier transform of the measure~$\mu$ on $V$, we obtain
	\begin{align*}
	\left\lvert\int_{\mathbf{R}^{D}} g_{\alpha+H,n} \diff\mu \right\rvert \ll  
	\int_{\mathbf{R}} 4n\min\left(\frac{1}{4n^2},\frac{1}{t^{2}}\right)\min(1,\lvert t \rvert^{-\epsilon})\diff t.
	\end{align*}
	Cutting the integral at $\lvert t \rvert = n^{1-\epsilon/2}$ and selecting the minimal value in each interval yields
	\begin{align*}
	\mu(\alpha + H) \leq \left\lvert\int_{\mathbf{R}^{D}} g_{\alpha+H,n} \diff\mu \right\rvert \ll  
n^{-\epsilon/2} + n^{-\epsilon(1-\epsilon)/2},
	\end{align*}
for all $n\geq 1$. This concludes the proof in the case $\epsilon<1$, for the case $\epsilon\geq 1$ one can cut the integral at $\lvert t \rvert = n^{2/3}$ and obtain $\mu(\alpha + H) \ll n^{-1/3}$ for all $n$.
\end{proof}

\section*{Acknowledgments}

	The results presented in this paper were motivated by questions of Greg Martin and Nathan Ng, and of Corentin Perret-Gentil. I thank them for encouraging me to write it and for giving me useful feedback.
	I also thank Daniel Fiorilli, Andrew Granville, and Florent Jouve for their advice and comments on this project.
	The paper also benefited from comments by Alexandre Lartaux.
	This work was supported by a Postdoctoral Fellowship at the University of Ottawa.
	
	\bibliographystyle{amsalpha}
	\bibliography{biblio}

\providecommand{\bysame}{\leavevmode\hbox to3em{\hrulefill}\thinspace}
\providecommand{\MR}{\relax\ifhmode\unskip\space\fi MR }
\providecommand{\MRhref}[2]{%
  \href{http://www.ams.org/mathscinet-getitem?mr=#1}{#2}
}
\providecommand{\href}[2]{#2}
\begin{thebibliography}{Hum12}

\bibitem[ANS14]{ANS}
Amir Akbary, Nathan Ng, and Majid Shahabi, \emph{Limiting distributions of the
  classical error terms of prime number theory}, Q. J. Math. \textbf{65}
  (2014), no.~3, 743--780.

\bibitem[CFJ16]{CFJ}
Byungchul Cha, Daniel Fiorilli, and Florent Jouve, \emph{Prime number races for
  elliptic curves over function fields}, Ann. Sci. \'{E}c. Norm. Sup\'{e}r. (4)
  \textbf{49} (2016), no.~5, 1239--1277.

\bibitem[CFJ17]{CFJ_Indep}
\bysame, \emph{Independence of the zeros of elliptic curve {$L$}-functions over
  function fields}, Int. Math. Res. Not. IMRN (2017), no.~9, 2614--2661.

\bibitem[Cha08]{Cha2008}
Byungchul Cha, \emph{Chebyshev's bias in function fields}, Compos. Math.
  \textbf{144} (2008), no.~6, 1351--1374.

\bibitem[Che99]{ChebLetter}
Pafnouti~L. Chebyshev, \emph{Oeuvres de {P}.{L}. {T}chebychef}, vol.~I,
  ch.~Lettre de {M}. le professeur {T}chebychev \`{a} M. Fuss, sur le nouveau
  th\'{e}or\`{e}me relatif aux nombres premiers contenus dans les formes $4n+1$
  et $4n+3$., pp.~697--698, St. Petersbourg, Commissionaires de l'Academie
  imperiale des sciences, 1899.

\bibitem[CI11]{ChaIm}
Byungchul Cha and Bo-Hae Im, \emph{Chebyshev's bias in {G}alois extensions of
  global function fields}, J. Number Theory \textbf{131} (2011), no.~10,
  1875--1886.

\bibitem[CK10]{ChaKim}
Byungchul Cha and Seick Kim, \emph{Biases in the prime number race of function
  fields}, J. Number Theory \textbf{130} (2010), no.~4, 1048--1055.

\bibitem[Dev19]{Devin_Chebyshev}
Lucile Devin, \emph{Chebyshev's bias for analytic {L}-functions}, Mathematical
  Proceedings of the Cambridge Philosophical Society (2019), 1--38.

\bibitem[DM18]{DevinMeng}
Lucile Devin and Xianchang Meng, \emph{Chebyshev's bias for products of
  irreducible polynomials}, arXiv:1809.09662, Sept. 2018.

\bibitem[FK02]{FordKonyagin_expository}
Kevin Ford and Sergei Konyagin, \emph{Chebyshev's conjecture and the prime
  number race}, I{V} {I}nternational {C}onference ``{M}odern {P}roblems of
  {N}umber {T}heory and its {A}pplications'': {C}urrent {P}roblems, {P}art {II}
  ({R}ussian) ({T}ula, 2001), Mosk. Gos. Univ. im. Lomonosova, Mekh.-Mat. Fak.,
  Moscow, 2002, pp.~67--91.

\bibitem[GM06]{GranvilleMartin}
Andrew Granville and Greg Martin, \emph{Prime number races}, Amer. Math.
  Monthly \textbf{113} (2006), no.~1, 1--33.

\bibitem[Hum12]{Humphries}
Peter Humphries, \emph{The {M}ertens and {P}\'olya {C}onjectures in {F}unction
  {F}ields}, August 2012.

\bibitem[Kow08]{Kowalski2010}
Emmanuel Kowalski, \emph{The large sieve, monodromy, and zeta functions of
  algebraic curves. {II}. {I}ndependence of the zeros}, Int. Math. Res. Not.
  IMRN (2008).

\bibitem[KR03]{KaczoRama}
Jerzy Kaczorowski and Olivier Ramar\'{e}, \emph{Almost periodicity of some
  error terms in prime number theory}, Acta Arith. \textbf{106} (2003), no.~3,
  277--297.

\bibitem[KR14]{KeatingRudnick}
Jonathan~P. Keating and Ze\'{e}v Rudnick, \emph{The variance of the number of
  prime polynomials in short intervals and in residue classes}, Int. Math. Res.
  Not. IMRN (2014), no.~1, 259--288.

\bibitem[Lan02]{LangAlg}
Serge Lang, \emph{Algebra}, third ed., Graduate Texts in Mathematics, vol. 211,
  Springer-Verlag, New York, 2002.

\bibitem[Li18]{Li2018}
Wanlin Li, \emph{Vanishing of hyperelliptic {L}-functions at the central
  point}, Journal of Number Theory \textbf{191} (2018), 85--103.

\bibitem[LMP]{LMP}
Jared~Duker Lichtman, Greg Martin, and Carl Pomerance, \emph{Primes in prime
  number races}, accepted for publication in Proc. Amer. Math. Soc.

\bibitem[MN17]{MartinNg}
Greg Martin and Nathan Ng, \emph{Inclusive prime number races},
  arXiv:1710.00088, September 2017.

\bibitem[PG19]{Perret-Gentil}
Corentin Perret-Gentil, \emph{Roots of {L}-functions of characters over
  function fields and of {K}loosterman sums: generic linear independence and
  consequences}, arXiv:1903.05491, March 2019.

\bibitem[Ros02]{Rosen2002}
Michael Rosen, \emph{Number theory in function fields}, Graduate Texts in
  Mathematics, vol. 210, Springer-Verlag, New York, 2002.

\bibitem[RS94]{RS}
Michael Rubinstein and Peter Sarnak, \emph{Chebyshev's bias}, Experiment. Math.
  \textbf{3} (1994), no.~3, 173--197.

\bibitem[RW19]{RudnickWaxman}
Ze{\'e}v Rudnick and Ezra Waxman, \emph{Angles of {Gaussian} {Primes}}, Isr. J.
  Math. (2019) (en).

\bibitem[Ste93]{Stein}
Elias~M. Stein, \emph{Harmonic analysis: real-variable methods, orthogonality,
  and oscillatory integrals}, Princeton Mathematical Series, vol.~43, Princeton
  University Press, Princeton, NJ, 1993, With the assistance of Timothy S.
  Murphy, Monographs in Harmonic Analysis, III.

\bibitem[Wei48]{Weil_RH}
Andr\'{e} Weil, \emph{Sur les courbes alg\'{e}briques et les vari\'{e}t\'{e}s
  qui s'en d\'{e}duisent}, Actualit\'{e}s Sci. Ind., no. 1041, Publ. Inst.
  Math. Univ. Strasbourg {\bf 7} (1945), Hermann et Cie., Paris, 1948.

\end{thebibliography}

\end{document}